\newtheorem{introtheorem}{Theorem}
\newtheorem{introcorollary}[introtheorem]{Corollary}
\theoremstyle{definition}
\newtheorem{introexample}[introtheorem]{Example}
\newtheorem{introdefinition}[introtheorem]{Definition}
\theoremstyle{plain}
\newtheorem{theorem}{Theorem}[section]
\newtheorem{proposition}[theorem]{Proposition}
\newtheorem{lemma}[theorem]{Lemma}
\newtheorem{corollary}[theorem]{Corollary}
\theoremstyle{definition}
\newtheorem{definition}[theorem]{Definition}
\newtheorem*{definition*}{Definition}
\newtheorem{example}[theorem]{Example}
\newtheorem{examples}[theorem]{Examples}
\newtheorem*{example*}{Example}
\newtheorem{remark}[theorem]{Remark}
\newtheorem*{remark*}{Remark}
\newtheorem{nd}[theorem]{Notations/Definitions}
\renewcommand{\bar}{\overline}
\newcommand{\isomto}{\overset{\sim}{\rightarrow}}
\DeclareMathOperator{\Hom}{Hom}
\DeclareMathOperator{\GL}{GL}
\newcommand{\A}{\mathbf{A}}
\newcommand{\C}{\mathbf{C}}
\newcommand{\D}{\mathbf{D}}
\newcommand{\G}{\mathbf{G}}
\newcommand{\Q}{\mathbf{Q}}
\newcommand{\R}{\mathbf{R}}
\newcommand{\T}{\mathbf{T}}
\newcommand{\Z}{\mathbf{Z}}
\newcommand{\cH}{\mathcal{H}}
\newcommand{\cM}{\mathcal{M}}
\newcommand{\cO}{\mathcal{O}}
\newcommand{\m}{\mathfrak{m}}
\newcommand{\n}{\mathfrak{n}}
\newcommand{\p}{\mathfrak{p}}
\newcommand{\ab}{\mathrm{ab}}
\begin{document}

\date{\today\ (version 1.0)}
\title[Hecke algebra isomorphisms and adelic points on algebraic groups]{Hecke algebra isomorphisms and \\ adelic points on algebraic groups}
\author[G.~Cornelissen]{Gunther Cornelissen}
\address{\normalfont Mathematisch Instituut, Universiteit Utrecht, Postbus 80.010, 3508 TA Utrecht, Nederland}
\email{g.cornelissen@uu.nl, v.z.karemaker@uu.nl}
\author[V.~Karemaker]{Valentijn Karemaker}
\subjclass[2010]{11F70, 11R56, 14L10, 20C08, 20G35, 22D20}
\keywords{algebraic groups, adeles, Hecke algebras, arithmetic equivalence}
\thanks{Part of this work was done when the authors visited the University of Warwick; we are grateful to Richard Sharp for hosting us. The second author would like to thank Jan Nekov\'a\v{r} for interesting discussions. We thank Manfred Lochter for sending us a copy of \cite{Lochter}, Dipendra Prasad for pointing us to existing results on isomorphisms of point groups over rings, and Wilberd van der Kallen and Maarten Solleveld for helpful comments on an earlier version of this paper.}

\begin{abstract}
Let $G$ denote a linear algebraic group over $\Q$ and $K$ and $L$ two number fields. Assume that there is a group isomorphism $G(\A_{K,f}) \cong G(\A_{L,f})$ of points on $G$ over the finite adeles of $K$ and $L$, respectively. We establish conditions on the group $G$, related to the structure of its Borel groups, under which  $K$ and $L$ have isomorphic adele rings. Under these conditions, if $K$ or $L$ is a \emph{Galois} extension of $\Q$ and $G(\A_{K,f}) \cong G(\A_{L,f})$, then $K$ and $L$ are isomorphic as fields. 

We use this result to show that if for two number fields $K$ and $L$ that are Galois over $\Q$, the finite Hecke algebras for $\GL(n)$ (for fixed $n \geq 2$) are isomorphic by an isometry for the $L^1$-norm, then the fields $K$ and $L$ are isomorphic. This can be viewed as an analogue in the theory of automorphic representations of the theorem of Neukirch that the absolute Galois group of a number field determines the field, if it is Galois over $\Q$.
\end{abstract}

\maketitle

\section{Introduction}
Suppose that $G$ is a linear algebraic group over $\Q$, and $K$ and $L$ are two number fields such that the (finite adelic) Hecke algebras for $G$ over $K$ and $L$ are isomorphic.
As we will see, a closely related hypothesis is: suppose that the groups of finite adelic points on $G$ are isomorphic for $K$ and $L$. What does this imply about the fields $K$ and $L$? 
Before stating the general result, let us discuss an example. 

\begin{introexample} \label{e1} If $G=\G_a^r \times \G_m^s$ for any integers $r,s$, then for any two distinct imaginary quadratic fields $K$ and $L$ of discriminant $<-8$ we have an isomorphism of topological groups $G(\A_{K,f}) \cong G(\A_{L,f})$ while $\A_K \not\cong \A_L$  and  $\A_{K,f} \not\cong \A_{L,f}$ (cf.\ Section \ref{counterex}). To prove this, one determines separately the abstract structures of the additive and multiplicative groups of the adele ring $\A_K$ and sees that they depend on only a few arithmetical invariants, allowing for a lot of freedom in ``exchanging local factors''. This example illustrates that at least some condition on the rank, unipotent rank, and action of the torus on the unipotent part will be required to deduce that we have a ring isomorphism $\A_K \cong \A_L$. 
\end{introexample} 

Let us now state the main technical condition, which we will elaborate on in Section \ref{nice}. 
\begin{introdefinition}
Let $G$ denote a linear algebraic group over $\Q$. We call $G$ \emph{fertile} for a field $K/\Q$ if $G$ contains a Borel group $B$ which is split over $K$ as $B = T \ltimes U$, such that over $K$, the split maximal torus $T \neq \{1\}$ acts non-trivially by conjugation on the abelianisation of the maximal unipotent group $U\neq \{0\}$.
 We will prove in Proposition \ref{Wilb} that $G$ being fertile for $K$ is equivalent to $G$ being a $K$-split group whose connected component is \emph{not} a direct product of a torus and a unipotent group. 
\end{introdefinition}

Split tori and unipotent groups are not fertile for any $K$. On the other hand, for $n \geq 2$, $\GL(n)$ is fertile for all $K$. In general, fertility is slightly stronger than non-commutativity. Roughly speaking, it says that the group has semisimple elements that do not commute with some unipotent elements. 

\medskip

Our first main result is: 

\begin{introtheorem} \label{mainG}
Let $K$ and $L$ be two number fields, and let $G$ denote a 
linear algebraic group over~$\Q$ which is fertile for $K$ and $L$. There is a topological group isomorphism of finite adelic point groups $G(\A_{K,f})~\cong~G(\A_{L,f})$ if and only if there is a  topological ring isomorphism $\A_K \cong \A_L$. 
\end{introtheorem}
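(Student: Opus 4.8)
The plan is to prove the two implications separately: the direction ``$\A_K \cong \A_L$ implies $G(\A_{K,f}) \cong G(\A_{L,f})$'' is formal, while the converse carries all the content. For the easy direction I would first descend a topological ring isomorphism $\A_K \cong \A_L$ to the finite adeles. The identity connected component of the additive group of $\A_K$ is exactly the archimedean factor $K\otimes_\Q\R$, which is also an ideal; hence any topological ring isomorphism matches these connected components and induces a topological ring isomorphism $\A_{K,f} \cong \A_{L,f}$ on the totally disconnected quotients. Since $G$ is defined over $\Q$, applying the functor of points to this ring isomorphism produces a group isomorphism $G(\A_{K,f}) \cong G(\A_{L,f})$, and continuity in both directions is automatic because $G$ is cut out inside some $\GL(n)$ by polynomial equations.

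For the converse the goal is to reconstruct the topological ring $\A_{K,f}$ from the bare topological group $G(\A_{K,f})$ in a way that an abstract isomorphism is forced to respect, and \emph{fertility} is precisely what makes this possible. Choosing a Borel $B = T \ltimes U$ split over $K$ with $T$ acting non-trivially on $U^{\ab}$, one finds a one-dimensional weight space --- a root subgroup $\G_a \subseteq U^{\ab}$ --- on which a suitable cocharacter of $T$ acts through a nonzero power of the identity character, producing a copy of the affine ``$ax+b$'' group $\G_a \rtimes \G_m$ inside $G$. On points this gives $\A_{K,f} \rtimes \A_{K,f}^\times$, and here the ring is visibly recoverable from the group structure alone: the additive group $\A_{K,f}$ appears as the derived subgroup, generated by the mixed commutators $[t,x] = (t-1)x$ (one checks that the $t-1$ for $t$ an idele span the unit ideal, so these commutators fill out all of $\A_{K,f}$); the multiplicative group $\A_{K,f}^\times$ appears as the abelianisation; and the conjugation action of the latter on the former is scalar multiplication. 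Multiplication on all of $\A_{K,f}$ is then recovered from its restriction to $\A_{K,f}^\times \times \A_{K,f}$ together with $0\cdot x = 0$, using that the ideles are dense in the finite adeles and that the group isomorphism is a homeomorphism.

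With this model in hand, I would carry out the reconstruction inside a general fertile $G$ by singling out the needed configuration group-theoretically: the relevant solvable subgroups (a Borel, and inside it the maximal unipotent together with a torus complement acting with dense orbits) ought to be characterised by intrinsic properties --- maximality among topologically solvable subgroups, the derived series, and centralisers of the unipotent part --- so that a topological group isomorphism $\Phi : G(\A_{K,f}) \to G(\A_{L,f})$ must carry this configuration for $K$ into a conjugate of the corresponding one for $L$. Transporting the additive group, the multiplicative group, and the scalar action through $\Phi$ then yields a topological ring isomorphism $\A_{K,f} \cong \A_{L,f}$. Finally I would bridge back to the full adeles: an isomorphism of finite adele rings forces $K$ and $L$ to share the splitting type of (almost) all finite primes, hence the same Dedekind zeta function and in particular the same signature $(r_1,r_2)$, so that $K\otimes_\Q\R \cong L\otimes_\Q\R$ and therefore $\A_K \cong \A_L$.

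The hard part is the rigidity step of the previous paragraph. The map $\Phi$ is only an abstract isomorphism of topological groups and a priori respects none of the algebraic structure: Borels, tori and unipotent radicals are not canonical, and $\Phi$ need not preserve any chosen Borel. Everything hinges on showing that the additive-group-with-scalar-action datum is pinned down by purely group-theoretic and topological conditions, up to the harmless ambiguity of conjugacy, so that $\Phi$ cannot scramble it. Example \ref{e1} shows this \emph{must} fail in the non-fertile case --- when $T$ acts trivially on $U^{\ab}$ there is no scalar action available to recover multiplication, and precisely then can one freely ``exchange local factors'' --- so the fertility hypothesis is exactly what powers this step.
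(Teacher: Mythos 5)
Your easy direction is fine and agrees in substance with the paper's (which simply invokes Proposition \ref{Klingen}(i) and the definition of finite-adelic point groups). But both load-bearing steps of your converse are missing or incorrect. First, the rigidity step that you yourself flag as ``the hard part'' is the entire content of the theorem, and the intrinsic characterisation you gesture at --- Borel configurations as subgroups maximal among topologically solvable ones, cut out by derived series and centralisers --- does not work: already in $\GL(2)$ the group of monomial matrices (the normaliser of the diagonal torus) is solvable and is not contained in any conjugate of the upper triangular group, so maximal solvable subgroups of $G(\A_{K,f})$ are not, up to conjugacy, Borel point groups, and an abstract isomorphism has no reason to carry your chosen configuration into a conjugate of the corresponding one. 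The paper's solution is a different invariant altogether: over the \emph{finite} adeles, maximal \emph{divisible} subgroups are exactly the conjugates of $U(\A_{K,f})$ (Proposition \ref{Urecovery}); divisibility is manifestly preserved by any abstract group isomorphism, and the proof of that proposition is substantial (Krasner's lemma to show divisible elements are unipotent locally, Borel--Tits for conjugacy, and a decomposition $G(K_\p)=B(K_\p)\mathcal{G}(\cO_\p)$ obtained via Lang's theorem and fppf cohomology to globalise the conjugating elements). Note also that this characterisation genuinely fails at archimedean places ($\mathrm{SO}(2,\R)$ is divisible but not unipotent), which is why the statement concerns finite adeles; your outline never confronts this.

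Second, even granting that the configuration is transported, your recovery of the multiplication is wrong as stated. Fertility only provides a nontrivial character $\chi(t)=t_1^{n_1}\cdots t_r^{n_r}$ acting on $U^{\ab}$; composing with a cocharacter yields an action $x \mapsto t^{n}x$ for some $n \neq 0$, and one cannot in general arrange $n=1$. For example, $\mathrm{SL}(2)$ is fertile for every $K$, but its torus acts on the root group by $t^2$: there is no copy of the genuine $ax+b$ group (your candidate subgroup has centre $\mu_2$), and conjugation by the abelianisation recovers only multiplication by \emph{squares} of ideles. Density of the ideles in the finite adeles does not repair this, since the defect is in the powers, not in topological closure. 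This is precisely the point where the paper invokes Siegel's formula (any element of a ring in which $n!$ is invertible is a $\Z$-linear combination of $n$-th powers) to show that an additive endomorphism commuting with all maps $x \mapsto t^n x$, $t \in \A_{K,f}^*$, is an adelic scalar (Lemma \ref{si}), whence $Z(\mathrm{End}_{\mathbf T}\mathbf V)\cong \A_{K,f}^{\ell}$ (Proposition \ref{end}); it then extracts the multiset of local fields $\{K_\p\}$ from the principal maximal ideals of $\A_{K,f}^{\ell}$ (Iwasawa--Lochter) and concludes local isomorphism via Proposition \ref{Klingen}. Your final bridge from $\A_{K,f}\cong\A_{L,f}$ to $\A_K\cong\A_L$ is fine, but the two gaps above are exactly where the theorem lives.
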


An isomorphism of adele rings $\A_K \cong \A_L$ implies (but is generally stronger than) arithmetic equivalence of $K$ and $L$ (Komatsu \cite{Komatsu}, cf.\ \cite{MR1638821}, VI.2). Recall  that $K$ and $L$ are said to be arithmetically equivalent if they have the same Dedekind zeta function: $\zeta_K=\zeta_L$ . If $K$ or $L$ is a Galois extension of $\Q$, then this is known to imply that $K$ and $L$ are isomorphic as fields (Ga{\ss}mann \cite{Gassmann}, cf.\ \cite{MR1638821}, III.1).

The question whether $G(R) \cong G(S)$ for algebraic groups $G$ and rings $R, S$ implies a ring isomorphism $R \cong S$ has been considered before (following seminal work of van der Waerden and Schreier from 1928 \cite{SV}), most notably when $G=\GL_n$ for $n \geq 3$ or when $G$ is a Chevalley group and $R$ and $S$ are integral domains (see, e.g., \cite{Chen}, \cite{Petechuk} and the references therein). The methods employed there make extensive use of root data and Lie algebras.

By contrast, our proof of Theorem \ref{mainG} uses number theory in adele rings and, by not passing to Lie algebras, applies to a more general class of (not necessarily reductive) algebraic groups. First, we prove in general that maximal divisible subgroups $\mathbf D$ of $G(\A_{K,f})$ and maximal unipotent point groups are the same up to conjugacy (Proposition \ref{Urecovery}; note that this does not apply at the archimedean places). The torus $\mathbf T$ (as a quotient of the normaliser $\mathbf N$ of the unipotent point group $\mathbf D$ by itself) acts on the abelian group $\mathbf V = [\mathbf N, \mathbf D]/[\mathbf D, \mathbf D],$ that decomposes as a sum of one-dimensional $\mathbf T$-modules, on which $\mathbf T$ acts by multiplication with powers. Now we use a formula of Siegel, which allows us to express any adele as a linear combination of fixed powers, to show how this implies that the centre of the endomorphism ring of the $\mathbf T$-module $\mathbf V$ is a a cartesian power of the finite adele ring. We then use the structure of the maximal principal ideals in the finite adele ring to find from these data the adele ring itself. 

\begin{introexample}Consider $G=\GL(2)$. Then $\mathbf D = \left ( \begin{smallmatrix} 1 & \A_{K,f} \\ 0 & 1 \end{smallmatrix} \right) \cong (\A_{K,f},+)$ is (conjugate to) a group of strictly upper triangular matrices, $\mathbf N =  \left ( \begin{smallmatrix} \A^*_{K,f} & \A_{K,f} \\ 0 & \A_{K,f}^* \end{smallmatrix} \right)$ and  $\T \cong (\A_{K,f}^*,\cdot)^2$ (represented as diagonal matrices) acts on $\mathbf V \cong \mathbf D$, represented as upper triangular matrices, by multiplication. Now $\mathrm{End}_{\mathbf T} \mathbf V \cong \A_{K,f}$ as (topological) rings. 
\end{introexample} 

\medskip

By the (finite adelic) \emph{Hecke algebra} for $G$ over $K$, we mean the convolution algebra $\cH_G(K):=C_c^\infty(G(\A_{K,f}),\R)$ of locally constant compactly supported real-valued functions on $G(\A_{K,f})$.
By an \emph{$L^1$-isomorphism} of Hecke algebra we mean one that respects the $L^1$-norm. The second main result is the following: 

\begin{introtheorem} \label{mainH}
Let $K$ and $L$ be two number fields, and let $G$ denote a linear algebraic group over $\Q$ that is fertile for $K$ and $L$. There is 
an $L^1$-isomorphism of Hecke algebras $\cH_G(K) \cong \cH_G(L)$ if and only if there is a ring isomorphism $\A_K \cong \A_L$. 
\end{introtheorem}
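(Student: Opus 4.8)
The plan is to reduce Theorem \ref{mainH} to Theorem \ref{mainG} by showing that an $L^1$-isomorphism of Hecke algebras is essentially equivalent to a topological group isomorphism of the underlying adelic point groups. The backward implication is the easy direction: given a ring isomorphism $\A_K \cong \A_L$, this induces an isomorphism of topological groups $G(\A_{K,f}) \cong G(\A_{L,f})$ (since $G$ is defined over $\Q$ and the construction of points is functorial in the coefficient ring), and this in turn transports locally constant compactly supported functions to locally constant compactly supported functions, respecting convolution and the $L^1$-norm (which is defined via the Haar measure that the isomorphism also carries across). So the real content is the forward direction.

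For the forward implication, the key idea is that the $L^1$-norm on $\cH_G(K) = C_c^\infty(G(\A_{K,f}),\R)$ allows one to recover the group $G(\A_{K,f})$ from the Banach-$*$-algebra structure. The plan is to first identify the group elements inside the Hecke algebra intrinsically: the point-mass (indicator) functions $\mathbf{1}_{gV}$ normalized by the measure of a compact open subgroup $V$ behave multiplicatively under convolution, and an $L^1$-isometric algebra isomorphism must send such normalized idempotent-like generators to their counterparts. More precisely, one expects that the \emph{extreme points} of the unit ball of $\cH_G(K)$ for the $L^1$-norm, or equivalently the invertible elements of $L^1$-norm one whose inverses also have norm one, correspond exactly to scalar multiples of translates $c \cdot \mathbf{1}_{gV}$, and the group law is recovered from convolution. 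This is the standard philosophy (going back to Wendel and Kawada) that an $L^1$-isometry of group algebras of locally compact groups forces an isomorphism of the groups themselves (up to a character twist and an automorphism of the measure).

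The key steps, in order, are as follows. First, I would verify that $\cH_G(K)$ is the convolution algebra $C_c^\infty(G(\A_{K,f}),\R)$ and recall that $G(\A_{K,f})$ is a locally compact, totally disconnected, unimodular group, so that its Haar measure is bi-invariant and the $L^1$-completion of $\cH_G(K)$ is the group algebra $L^1(G(\A_{K,f}))$. Second, I would invoke (or reprove in this totally disconnected setting) a Wendel-type theorem: an isometric algebra isomorphism $\cH_G(K) \cong \cH_G(L)$ for the $L^1$-norm extends to an isometric isomorphism of the $L^1$-group algebras, and any such isomorphism arises from a topological group isomorphism $\varphi \colon G(\A_{K,f}) \to G(\A_{L,f})$ composed with scaling by the modular/measure-scaling factor. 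Because both groups are unimodular and the map is an $L^1$-\emph{isometry}, this scaling factor is trivial, so one extracts a genuine topological group isomorphism. Third, having produced $G(\A_{K,f}) \cong G(\A_{L,f})$ as topological groups, I would apply Theorem \ref{mainG} (using that $G$ is fertile for both $K$ and $L$) to conclude $\A_K \cong \A_L$ as topological rings. This closes the forward direction and, together with the backward direction, proves the equivalence.

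\textbf{The main obstacle} I anticipate is the second step: carefully establishing the Wendel-type rigidity in the present generality. The classical theorem identifies isometric isomorphisms of $L^1(G)$ with (weighted) group isomorphisms, but one must ensure that working with the dense subalgebra $\cH_G(K)$ of locally constant compactly supported functions — rather than the full $L^1$-completion — does not lose information, and that the isometry hypothesis is genuinely being used to rule out the modular character twist. Since $G(\A_{K,f})$ is totally disconnected, the compact open subgroups supply a rich supply of idempotents, which should make the recovery of group elements cleaner than in the connected case; the precise argument likely identifies $\varphi$ via its action on the lattice of such idempotents. The remaining subtlety is purely formal: confirming that an abstract algebra isomorphism which is simultaneously an $L^1$-isometry is automatically continuous and extends to the completion, so that the structure theorem applies verbatim.
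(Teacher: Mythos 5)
Your proposal follows essentially the same route as the paper: both directions reduce to Theorem~\ref{mainG}, with the forward direction proved by (1) showing the Hecke algebra is $L^1$-dense in the group algebra $L^1(G(\A_{K,f}))$, (2) invoking the Kawada--Wendel theorem that an $L^1$-isometric isomorphism of group algebras of locally compact groups is induced by a topological group isomorphism, and (3) feeding the resulting isomorphism $G(\A_{K,f})\cong G(\A_{L,f})$ into Theorem~\ref{mainG}. (The paper establishes the density step via the locally compact Stone--Weierstrass theorem, whereas your appeal to local constancy and compact open subgroups in the totally disconnected setting is an equally valid, arguably more elementary, justification.)

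One claim in your second step is false as stated, though not fatally so: $G(\A_{K,f})$ need \emph{not} be unimodular for fertile $G$. Fertility includes solvable groups such as the Borel group of upper triangular matrices in $\GL(2)$ and the ``$ax+b$'' group $\mathrm{Res}^F_{\Q}(\G_m\ltimes\G_a)$ (Examples (ii) and (iii) of Section~\ref{nice}), whose adelic point groups are not unimodular; in particular the Haar measure in Definition~\ref{Heckealgebra} is only a \emph{left} Haar measure. Fortunately you do not actually need unimodularity: Wendel's theorem applies to arbitrary locally compact groups, and the possible character/measure-normalisation twist in the form of the isometry does not obstruct extracting the underlying topological group isomorphism, which is all that step (3) requires. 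So your argument goes through once that incorrect justification is simply deleted.
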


This follows from the previous theorem by using some density results in functional analysis and a theorem on the reconstruction of an isomorphism of groups from an isometry of $L^1$-group algebras due to Kawada \cite{Kawada} and Wendel  \cite{MR0049910}. It seems that the analytic condition of being an isometry for the $L^1$-norm is necessary (forthcoming work of the second author uses the Bernstein decomposition to show that purely algebraic isomorphisms of Hecke algebras of local fields cannot distinguish local fields). 

\medskip

Let $G_K$ denote the absolute Galois group of a number field $K$ that is Galois over $\Q$. Neukirch \cite{Neukirch} has proven that $G_K$ determines $K$ (Uchida \cite{Uchida} later removed the condition that $K$ is Galois over $\Q$). The set of one-dimensional representations of $G_K$, i.e., the abelianisation $G_K^{\mathrm{{\tiny ab}}}$, far from determines $K$ (compare \cite{Onabe} or \cite{AngSte}). Several years ago, in connection with the results in \cite{CM}, Jonathan Rosenberg asked the first author whether, in a suitable sense, two-dimensional irreducible---the ``lowest-dimensional non-abelian''--- representations of $G_K$ determine $K$. By the philosophy of the global Langlands programme, such representations of $G_K$ in $\GL(n,\C)$ should give rise to automorphic representations, i.e., to certain modules over the Hecke algebra $\cH_{\GL(n)}(K)$. If we consider the analogue of this question in the setting of $\cH_{\GL(n)}(K)$-modules instead of $n$-dimensional Galois representations, our main theorem implies a kind of ``automorphic anabelian theorem'':

\begin{introcorollary} \label{corc}
Suppose that $K$ and $L$ are number fields that are Galois over $\Q$. There is
an $L^1$-algebra isomorphism of Hecke algebras $\cH_{\GL(n)}(K) \cong \cH_{\GL(n)}(L)$ for some $n \geq 2$ if and only if there is a field isomorphism $K \cong L$. 
\end{introcorollary}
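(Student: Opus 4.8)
The plan is to deduce Corollary \ref{corc} from Theorem \ref{mainH} together with two classical facts about adele rings, using the intermediate condition ``$\A_K \cong \A_L$ as topological rings'' as a pivot. Concretely, I would establish the two biconditionals
\[
\cH_{\GL(n)}(K) \cong \cH_{\GL(n)}(L) \iff \A_K \cong \A_L \iff K \cong L,
\]
where the first equivalence is a direct application of Theorem \ref{mainH} and the second is pure number theory, specialised to the Galois case.

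For the first equivalence the only point to check is that $G = \GL(n)$ satisfies the hypothesis of Theorem \ref{mainH}, i.e.\ that it is fertile for both $K$ and $L$. Since $n \geq 2$, this holds for \emph{every} number field: the Borel subgroup of upper-triangular matrices is split, with split diagonal torus $T \neq \{1\}$ acting non-trivially by conjugation on the abelianisation of the strictly-upper-triangular unipotent radical $U \neq \{0\}$, exactly as recorded in the discussion following the definition of fertility. Theorem \ref{mainH} then yields immediately that an $L^1$-isomorphism $\cH_{\GL(n)}(K) \cong \cH_{\GL(n)}(L)$ exists if and only if there is a topological ring isomorphism $\A_K \cong \A_L$.

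For the second equivalence I would argue separately in each direction. If $K \cong L$ as fields, then functoriality of the adele ring construction produces a topological ring isomorphism $\A_K \cong \A_L$; this direction needs no Galois hypothesis. Conversely, suppose $\A_K \cong \A_L$. By Komatsu's theorem \cite{Komatsu}, an isomorphism of adele rings implies that $K$ and $L$ are arithmetically equivalent, i.e.\ $\zeta_K = \zeta_L$; and by Ga{\ss}mann's theorem \cite{Gassmann}, two arithmetically equivalent number fields, one of which is Galois over $\Q$, are isomorphic as fields. Combining the two biconditionals gives the corollary.

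Since the substantive work is already carried out in Theorem \ref{mainH}, I expect the assembly itself to present no serious obstacle; it is essentially formal. The two points deserving care are the verification of fertility for $\GL(n)$ (immediate from the explicit Borel structure above) and the precise location where the Galois hypothesis is used: it enters only at the final step, through Ga{\ss}mann, and it is genuinely necessary there, since there exist non-isomorphic arithmetically equivalent number fields (necessarily non-Galois over $\Q$), so that without the hypothesis neither the adelic data nor even the Dedekind zeta function would suffice to pin down the field.
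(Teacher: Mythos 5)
Your proposal is correct and follows essentially the same route as the paper: the authors likewise combine Theorem \ref{mainH} with the fertility of $\GL(n)$ for $n \geq 2$ and then pass from $\A_K \cong \A_L$ to $K \cong L$ via Proposition \ref{Klingen}(iii), which packages exactly the Komatsu and Ga{\ss}mann results you cite. Your explicit treatment of both directions and of where the Galois hypothesis enters matches the paper's (more tersely stated) argument.
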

\medskip 

The paper has the following structure. In Section \ref{counterex}, we discuss what happens if $G$ is the additive or multiplicative group or a direct product thereof. In Section \ref{nice}, we introduce and discuss the notion of fertility. In Section \ref{div}, we prove that maximal divisibility is equivalent to unipotency in finite-adelic point groups. In Section \ref{red3} 
 we use this to prove Theorem \ref{mainG}. In Section \ref{HH}, we discuss Hecke algebras and prove Theorem \ref{mainH}. At the end of the paper, we discuss some open problems. 

\section{Additive and multiplicative groups of adeles} \label{counterex}

In this section, we elaborate on Example \ref{e1} from the introduction. We discuss the group structure of the additive and multiplicative groups of adeles of a number field, and we recall the notions of local isomorphism of number fields and its relation to isomorphism of adele rings and arithmetic equivalence. We introduce \emph{local} additive and multiplicative isomorphisms and prove that their existence implies arithmetic equivalence. 
\subsection*{Arithmetic equivalence and local isomorphism}

\begin{nd} \label{deflocal}
If $K$ is a number field with ring of integers $\cO_K$, let $M_K$ denote the set of all places of $K$, $M_{K,f}$ the set of non-archimedean places of $K$, and $M_{K,\infty}$ the set of archimedean places. If $\p \in M_{K,f}$ is a prime ideal, then $K_{\p}$ denotes the completion of $K$ at $\p$, and $\cO_{K,\p}$ its ring of integers. Let $e(\p)$ and $f(\p)$ denote the ramification and residue degrees of $\p$ over the rational prime $p$ below $\p$, respectively. The \emph{decomposition type} of a rational prime $p$ in a field $K$ is the sequence $(f(\p))_{\p \mid p}$ of residue degrees of the prime ideals of $K$ above $p$, in increasing order, with multiplicities.

We use the notation $\prod\nolimits^{\prime} (G_i,H_i)$ for the restricted product of the group $G_i$ with respect to\ the subgroups $H_i$. We denote by $$\A_K={\prod\limits_{\p \in M_{K}}}^{\hspace*{-2mm} \prime}\left(K_{\p},\cO_{K,\p}\right)$$ the adele ring of $K$, and by $$\A_{K,f}={\prod\limits_{\p \in M_{K,f}}}^{\hspace*{-3mm} \prime}\left(K_{\p},\cO_{K,\p}\right)$$ its ring of finite adeles. 

Two number fields $K$ and $L$ are \emph{arithmetically equivalent} if for all but finitely many prime numbers $p$, the decomposition types of $p$ in $K$ and $L$ coincide.  

Two number fields $K$ and $L$ are called \emph{locally isomorphic} if there is a bijection $  \varphi \colon M_{K,f} \rightarrow M_{L,f}$ between their sets of prime ideals such that the corresponding local fields are topologically isomorphic, i.e.\ $K_{\p} \cong L_{\varphi(\p)}$ for all $\p \in M_{K,f}$. 
\end{nd}

 The main properties are summarised in the following proposition (see e.g.\ \cite{MR1638821}, III.1 and VI.2): 

\begin{proposition}\label{Klingen} Let $K$ and $L$ be number fields. Then:
\begin{itemize}
 \item[(i)]
$K$ and $L$ are locally isomorphic if and only if the adele rings $\A_K$ and $\A_L$ are isomorphic as topological rings, if and only if 
the rings of finite adeles $\A_{K,f}$ and $\A_{L,f}$ are isomorphic as topological rings.
 \item[(ii)] $K$ and $L$ are arithmetically equivalent if and only if $\zeta_K = \zeta_L$, if and only if 
there is a bijection $  \varphi \colon M_{K,f} \rightarrow M_{L,f}$ such that the local fields $K_{\p} \cong L_{\varphi(\p)}$ are isomorphic for \emph{all but finitely many} $\p \in M_{K,f}$. 
 \item[(iii)] We have $ K \cong L \Rightarrow \A_K \cong \A_L$ (as topological rings) $\Rightarrow \zeta_K = \zeta_L $ and none of the implications can be reversed in general, but if $K$ or $L$ is Galois over $\Q$, then all implications can be reversed. $\qed$
\end{itemize}
\end{proposition}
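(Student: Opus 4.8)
The plan is to treat the proposition as a cycle relating four packages of data attached to $(K,L)$: a local isomorphism (a bijection of finite places with topologically isomorphic completions), an isomorphism of adele rings (finite or full), arithmetic equivalence in the decomposition-type sense, and the equality $\zeta_K=\zeta_L$. I would prove the two substantial equivalences (i) and (ii) directly, and then assemble (iii) formally from them together with functoriality and one rigidity input in the Galois case.

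For (i), the forward direction is routine: given $\varphi$ with topological isomorphisms $K_\p \isomto L_{\varphi(\p)}$, I would note that a topological ring isomorphism of nonarchimedean local fields automatically carries $\cO_{K,\p}$ onto $\cO_{L,\varphi(\p)}$ (these are intrinsic as the unique maximal compact subrings), so the local isomorphisms glue to a topological ring isomorphism $\A_{K,f}\isomto\A_{L,f}$ of restricted products. The heart of the matter is the converse: recovering the individual completions intrinsically from the topological ring $\A_{K,f}$. Here I would use the Boolean algebra of idempotents: an idempotent is exactly a $\{0,1\}$-valued adele, so this algebra is canonically $\mathcal{P}(M_{K,f})$, its atoms correspond bijectively to the finite places $\p$, and the principal ideal $e_\p\A_{K,f}$ cut out by the atom $e_\p$ is precisely $K_\p$ with its topology. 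A topological ring isomorphism induces an isomorphism of these Boolean algebras, hence a bijection $\varphi$ on atoms together with topological ring isomorphisms $e_\p\A_{K,f}\isomto e_{\varphi(\p)}\A_{L,f}$, that is, $K_\p\isomto L_{\varphi(\p)}$. To pass between the finite and the full adele ring I would split off the archimedean part as the connected component of $0$: $\A_{K,\infty}=K\otimes_\Q\R$ is connected while $\A_{K,f}$ is totally disconnected, so any isomorphism $\A_K\cong\A_L$ restricts to $\A_{K,\infty}\cong\A_{L,\infty}$ and descends to $\A_{K,f}\cong\A_{L,f}$; conversely local isomorphism gives $\A_{K,f}\cong\A_{L,f}$, and (via (ii)) equal signatures $(r_1,r_2)$ give $\A_{K,\infty}\cong\A_{L,\infty}$, reassembling the full isomorphism.

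For (ii), I would use that the Euler factor of $\zeta_K$ at $p$ is $\prod_{\p\mid p}(1-N\p^{-s})^{-1}$ and depends only on the decomposition type $(f(\p))_{\p\mid p}$; uniqueness of Euler products then shows $\zeta_K=\zeta_L$ is equivalent to equality of decomposition types for every $p$, while arithmetic equivalence as defined only demands this for almost all $p$ — the two coincide by the standard fact that agreement at almost all $p$ forces agreement at all $p$. To match the third formulation, I would observe that a topological isomorphism $K_\p\cong L_{\varphi(\p)}$ forces equal residue characteristic, so $\varphi$ automatically preserves the rational prime below, and that at the cofinitely many unramified places a local field is determined up to isomorphism by its residue degree $f$; equality of the multisets $(f(\p))_{\p\mid p}$ for almost all $p$ is thus exactly the existence of $\varphi$ inducing local isomorphisms at almost all places.

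Finally, (iii) is assembled formally: $K\cong L\Rightarrow\A_K\cong\A_L$ is functoriality of the adele construction, and $\A_K\cong\A_L\Rightarrow\zeta_K=\zeta_L$ chains (i) and (ii). For non-reversibility in general I would cite the known constructions (Gassmann's arithmetically equivalent but non-isomorphic fields, and Komatsu-type fields that are arithmetically equivalent but not locally isomorphic). For the Galois case the extra ingredient is Gassmann rigidity: writing $G=\mathrm{Gal}(M/\Q)$ for a Galois extension $M/\Q$ containing both $K$ and $L$, and $H_K,H_L\le G$ the corresponding subgroups, arithmetic equivalence is equivalent to the Gassmann condition $|C\cap H_K|=|C\cap H_L|$ for every conjugacy class $C$ of $G$; if $K/\Q$ is Galois then $H_K\trianglelefteq G$ is a union of conjugacy classes, and the Gassmann condition forces $H_L\supseteq H_K$ with $|H_L|=|H_K|$, hence $H_L=H_K$ and $K\cong L$. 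This upgrades every implication in the chain to an equivalence. I expect the main obstacle to be precisely the converse half of (i) — the intrinsic, topology-respecting recovery of the completions as the atomic pieces of $\A_{K,f}$ — together with checking that the recovered isomorphisms are genuinely topological rather than merely abstract.
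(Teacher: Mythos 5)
Your proposal is correct in substance, but it is worth noting that the paper does not prove this proposition at all: it is stated with a \qed\ as a compendium of known results, with a pointer to Klingen's book (III.1 and VI.2), drawing on Gassmann, Perlis, Komatsu, and Iwasawa. So your write-up supplies actual arguments where the paper supplies citations. Your key technical idea for the hard direction of (i) --- recovering the completions from the topological ring $\A_{K,f}$ via the Boolean algebra of idempotents, whose atoms $e_\p$ cut out ideals $e_\p\A_{K,f} \cong K_\p$ --- is a clean and correct route; it is a close cousin of, but not identical to, the Iwasawa--Lochter description that the paper itself invokes later (in the proof of Theorem A) of the \emph{principal maximal} ideals $\m_\p = \ker(\A_{K,f}\to K_\p)$. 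Your atom argument has the advantage of being purely order-theoretic on idempotents, sidestepping any classification of maximal ideals (most maximal ideals of $\A_{K,f}$ come from ultrafilters and are not principal). The splitting of $\A_K$ into archimedean and finite parts via the connected component of $0$, the Euler-factor argument for (ii), and the Gassmann rigidity argument ($H_K$ normal and a union of conjugacy classes forces $H_K = H_L$) for the Galois case of (iii) are all correct and are exactly the standard proofs underlying the cited literature.

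Two loose ends deserve to be named. First, in reassembling $\A_K \cong \A_L$ from $\A_{K,f}\cong\A_{L,f}$ you need equal signatures $(r_1,r_2)$, and you attribute this to ``(ii)''; but (ii) only yields $\zeta_K=\zeta_L$, and the step from arithmetic equivalence to equality of $r_1$ and $r_2$ is a separate theorem (Perlis, via the Gassmann condition, or via uniqueness of the functional equation's Gamma factors) --- it should be cited as such, just as your almost-all-to-all upgrade of decomposition types cites Perlis. Second, ``none of the implications can be reversed'' requires \emph{two} counterexamples: your Gassmann and Komatsu citations show that $\zeta_K=\zeta_L$ implies neither $K\cong L$ nor $\A_K\cong\A_L$, but to see that $\A_K\cong\A_L \not\Rightarrow K\cong L$ you need Komatsu's other construction (degree-$8$ fields with isomorphic adele rings that are not isomorphic as fields); neither of the examples you cite formally yields this. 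Both points are citation-level repairs, not flaws in the method.
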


\subsection*{The additive group of adeles}

\begin{proposition} \label{a}
If $H$ is a number field, then there are topological isomorphisms of additive groups
\[
(\A_{H,f},+) \cong (\A_{\Q,f}^{[H: \Q]},+)
\]
and $$(\A_H,+) \cong (\A_{\Q}^{[H:\Q]},+).$$  \end{proposition}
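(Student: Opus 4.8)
The plan is to prove both isomorphisms at once by regrouping the restricted product defining $\A_H$ according to the rational primes lying below, and then to exploit the existence of a global integral basis of $\cO_H$ over $\Z$. First I would recall the local decomposition: for each rational prime $p$ one has $H \otimes_\Q \Q_p \cong \prod_{\p \mid p} H_\p$ together with its integral version $\cO_H \otimes_\Z \Z_p \cong \prod_{\p\mid p}\cO_{H,\p}$. Grouping the factors of $\A_{H,f} = {\prod_{\p\in M_{H,f}}}'(H_\p,\cO_{H,\p})$ by the prime $p$ below $\p$ identifies
\[
\A_{H,f} \cong {\prod_p}'\big(H\otimes_\Q\Q_p,\ \cO_H\otimes_\Z\Z_p\big),
\]
the restricted product being taken with respect to the open compact subgroups $\cO_H\otimes_\Z\Z_p$; this regrouping is legitimate because only finitely many $\p$ lie above each $p$, and the finitely many exceptional primes of any given adele lie over finitely many $p$.

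The heart of the argument is to choose one integral basis $\omega_1,\dots,\omega_n$ of $\cO_H$ as a free $\Z$-module of rank $n=[H:\Q]$, and to use it simultaneously at every place. For each $p$, this basis is also a $\Z_p$-basis of $\cO_H\otimes_\Z\Z_p$ and a $\Q_p$-basis of $H\otimes_\Q\Q_p$, so the coordinate map $\sum a_i\omega_i \mapsto (a_1,\dots,a_n)$ gives a topological group isomorphism of pairs $\big(H\otimes_\Q\Q_p,\ \cO_H\otimes_\Z\Z_p\big)\cong(\Q_p^n,\Z_p^n)$. Because the same basis is used for all $p$, these local isomorphisms carry $\cO_H\otimes_\Z\Z_p$ exactly onto $\Z_p^n$ for every $p$, so they assemble into an isomorphism of restricted products
\[
\A_{H,f} \cong {\prod_p}'(\Q_p^n,\Z_p^n) = \Big({\prod_p}'(\Q_p,\Z_p)\Big)^n = \A_{\Q,f}^n,
\]
and this is a homeomorphism, since open compact subgroups correspond to open compact subgroups.

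For the full adele ring I would simply adjoin the archimedean component: $H\otimes_\Q\R\cong\R^{r_1}\times\C^{r_2}$ is, as a topological additive group, isomorphic to $\R^n$ with $n=r_1+2r_2$, and $\A_H\cong\A_{H,f}\times(H\otimes_\Q\R)$, whence
\[
\A_H \cong \A_{\Q,f}^n\times\R^n \cong (\A_{\Q,f}\times\R)^n = \A_\Q^n.
\]
The main obstacle is not any single computation but the bookkeeping that makes the identification topological rather than merely algebraic: one must check that the regrouping by $p$ respects the restricted-product structure and that the chosen coordinate maps send maximal compact subgroups to maximal compact subgroups \emph{uniformly} in $p$. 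This is precisely what the use of a single global integral basis of $\cO_H$ secures, as it guarantees $\cO_H\otimes_\Z\Z_p\mapsto\Z_p^n$ for all $p$ at once rather than just for almost all of them.
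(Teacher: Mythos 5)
Your proof is correct, and its skeleton is the same as the paper's: regroup the restricted product over $\p \in M_{H,f}$ according to the rational primes $p$ below, identify each grouped pair with $(\Q_p^n,\Z_p^n)$ where $n=[H:\Q]$, reassemble into $\A_{\Q,f}^n$, and adjoin $\R^n$ for the full adele ring. The difference lies in how the local identification is produced. The paper works place by place: it uses that each $\cO_{H,\p}$ is a free $\Z_p$-module of rank $e(\p)f(\p)$, picks a $\Z_p$-basis at each $\p$, and sums over $\p \mid p$, implicitly invoking $\sum_{\p \mid p} e(\p)f(\p) = n$. You instead invoke the global fact that $\cO_H$ is free of rank $n$ over $\Z$ and use that single basis at every $p$ via the identities $H\otimes_\Q\Q_p \cong \prod_{\p\mid p} H_\p$ and $\cO_H\otimes_\Z\Z_p \cong \prod_{\p\mid p}\cO_{H,\p}$; this packages the degree count into the tensor identity and is arguably tidier. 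One remark: your closing claim that the global integral basis is what secures $\cO_H\otimes_\Z\Z_p \mapsto \Z_p^n$ \emph{exactly} for all $p$, ``rather than just for almost all of them,'' overstates the issue. The paper's place-by-place construction also carries the compact subgroup exactly onto $\Z_p^n$ at every $p$ --- any choice of $\Z_p$-basis of $\bigoplus_{\p\mid p}\cO_{H,\p}$ does this --- so uniformity of the basis in $p$ is not actually needed; what the restricted-product isomorphism requires is only that each local isomorphism match the distinguished compact open subgroups, which both constructions achieve.
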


\begin{proof}
If $\p$ is a prime of $H$ above the rational prime $p$, then $\mathcal{O}_{H,\p}$ is a free $\Z_p$-module of rank $e(\p)f(\p)$ (cf.\ \cite{cassels} 5.3-5.4), and tensoring with $\Q$ gives a compatible diagram of isomorphisms of additive groups 
$$\xymatrix{ (\mathcal{O}_{H,\p},+) \ar@{->}[r]_-{\sim} \ar@{^{(}->}[d]  & (\Z_p^{e(\p)f(\p)},+)  \ar@{^{(}->}[d] \\ 
(H_{\p},+) \ar@{->}[r]_-{\sim} & (\Q_p^{e(\p)f(\p)},+)
     }$$
which we can sum over all $\p \mid p$ for fixed $p$, to find 
$$\xymatrix{ (\bigoplus\limits_{\p \mid p} \mathcal{O}_{H,\p},+) \ar@{->}[r]_-{\sim} \ar@{^{(}->}[d]  & (\Z_p^n,+)  \ar@{^{(}->}[d] \\ 
(\bigoplus\limits_{\p \mid p} H_{\p},+) \ar@{->}[r]_-{\sim} & (\Q_p^n,+)
     }$$
for $n=[H:\Q]$. It follows that 
\begin{align*} (\A_{H,f},+) &= {\prod\limits_{p \in M_{\Q,f}}}^{\prime} (\bigoplus_{\p|p}(H_{\p},+),\bigoplus_{\p|p}(\cO_{H,\p},+)) \\ &\cong {\prod\limits_{p \in M_{\Q,f}}}^{\hspace*{0mm} \prime} ((\Q_p^n,+),(\Z_p^n,+)) \\ & \cong (\A_{\Q,f}^n,+) 
\end{align*}
and hence
\begin{align*} (\A_H,+) &= (\A_{H,f},+) \times (\R^n,+) \\ & \cong (\A_{\Q,f}^n,+) \times (\R^n,+) \cong (\A_{\Q}^n,+). 
\end{align*}
\end{proof}
\begin{corollary}
The additive groups $(\A_K,+)$ and $(\A_L,+)$ are isomorphic (as topological groups) for two number fields $K$ and $L$ if and only if $K$ and $L$ have have the same degree over $\Q$.  For finite adeles, $[K \colon \Q] = [L \colon \Q]$ implies $(\A_{K,f},+) \cong (\A_{L,f},+)$.
\end{corollary}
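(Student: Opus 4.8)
The plan is to deduce the Corollary directly from Proposition \ref{a}, which already supplies the hard direction. Since Proposition \ref{a} gives a topological isomorphism $(\A_H,+) \cong (\A_{\Q}^{[H:\Q]},+)$ for \emph{every} number field $H$, the additive group of the adele ring depends only on the degree $[H:\Q]$. So if $K$ and $L$ have the same degree $n$ over $\Q$, then $(\A_K,+) \cong (\A_{\Q}^n,+) \cong (\A_L,+)$, and likewise $(\A_{K,f},+) \cong (\A_{\Q,f}^n,+) \cong (\A_{L,f},+)$ for the finite adeles; this handles the ``if'' direction in the full adelic case and the implication stated for finite adeles.

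The remaining content is the converse for the full adele ring: an isomorphism $(\A_K,+) \cong (\A_L,+)$ should force $[K:\Q] = [L:\Q]$. My approach here would be to extract a degree-detecting invariant from the topological group $(\A_K,+)$. The cleanest route is to use the archimedean part: by Proposition \ref{a} we have $(\A_K,+) \cong (\A_{\Q,f}^n,+) \times (\R^n,+)$ with $n=[K:\Q]$, and the finite-adelic factor $(\A_{\Q,f}^n,+)$ is totally disconnected while $(\R^n,+)$ carries the connected structure. Thus I would identify the connected component of the identity in $(\A_K,+)$, which is isomorphic to $(\R^n,+)$, and recover $n$ as its topological dimension (equivalently, as the unique integer $d$ with connected component $\cong \R^d$). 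Since any topological group isomorphism carries the connected component of the identity to the connected component of the identity, an isomorphism $(\A_K,+)\cong(\A_L,+)$ forces $\R^{[K:\Q]}\cong\R^{[L:\Q]}$ and hence $[K:\Q]=[L:\Q]$.

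The step I expect to require the most care is this extraction of the degree as a topological invariant, specifically justifying that the connected component of the identity in $(\A_H,+)$ is exactly the archimedean factor $(\R^n,+)$ and that its dimension is well-defined and isomorphism-invariant. One must check that the finite-adelic factor is totally disconnected (which follows since each $\Q_p$ is totally disconnected and the restricted product preserves this), so that the connected component of $0$ in the product is precisely $\{0\}\times\R^n$. The invariance of the dimension of $\R^n$ under topological group isomorphism can be cited from invariance of domain, or more elementarily from the fact that $\R^n$ determines $n$ as a topological (indeed as a locally compact connected abelian) group.

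It is worth noting that the Corollary as stated only asserts the biconditional for the full adele ring and the one-directional implication for finite adeles; the converse fails for finite adeles precisely because $(\A_{K,f},+)$ is totally disconnected and carries no archimedean factor to detect the degree, which is consistent with Example \ref{e1}. Hence no converse claim for the finite-adelic additive groups needs to be proved, and indeed none should be attempted.
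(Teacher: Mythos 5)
Your proposal is correct and follows essentially the same route as the paper: both directions rest on Proposition \ref{a}, and the converse is obtained by passing to the connected component of the identity (using total disconnectedness of the finite adeles) to reduce to $\R^{[K:\Q]} \cong \R^{[L:\Q]}$, after which the degree is recovered by dimension invariance --- the paper cites the homology of $\R^n \setminus \{0\}$ where you cite invariance of domain, an immaterial difference.
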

\begin{proof}
By Proposition \ref{a}, we know that $[K:\Q] = [L:\Q]$ implies that $(\A_{K,f},+) \cong (\A_{L,f},+)$ and $(\A_K,+) \cong (\A_L,+)$.

Conversely, a topological isomorphism $(\A_K,+) \cong (\A_L,+)$ of additive groups induces a homeomorphism between their respective connected components of the identity, i.e.,
\[
\R^{[K:\Q]} \cong \R^{[L:\Q]}.
\]
From this, using the fact that if $m\neq n$, then $\R^m-\{0\}$ and $\R^m-\{0\}$ have different homology groups, 
we deduce that $[K:\Q] = [L:\Q]$. 
\end{proof} 

If additionally, the isomorphism is ``local'', i.e.\ induced by local additive isomorphisms, then we have the following result: 

\begin{proposition}
Let $K$ and $L$ be number fields such that there is a bijection $\varphi \colon M_{K,f} \rightarrow M_{L,f}$ with, for almost all places $\p$, isomorphisms of topological groups $\Phi_{\p} \colon (K_{\p},+) \cong (L_{\varphi(\p)},+)$. Then $K$ and $L$ are arithmetically equivalent.
\end{proposition}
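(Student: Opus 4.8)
The plan is to extract from each local additive isomorphism $\Phi_\p$ the two pieces of arithmetic data that the additive group still retains---the residue characteristic and the local degree---and then to use the fact that almost all rational primes are unramified in both fields to turn local degrees into residue degrees, which is exactly what arithmetic equivalence records.

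First I would show that the topological group $(K_\p,+)$ determines the rational prime $p$ below $\p$ together with the local degree $n := [K_\p:\Q_p] = e(\p)f(\p)$. As a topological group $(K_\p,+) \cong (\Q_p^n,+)$, and every compact open subgroup $U$ is a $\Z_p$-lattice, hence topologically isomorphic to $\Z_p^n$, so all of its finite quotients are $p$-groups. Therefore $p$ is recovered as the least index of a proper open subgroup of any compact open subgroup $U$, and, once $p$ is known, $n = \log_p [U : pU]$, where $pU$ denotes the image of $U$ under the ($p$-fold) multiplication map $x \mapsto px$. Both quantities are manifestly invariant under isomorphisms of topological groups, so each $\Phi_\p$ forces $\p$ and $\varphi(\p)$ to lie above a common rational prime $p$ and to satisfy $e(\p)f(\p) = e(\varphi(\p))f(\varphi(\p))$.

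Next I would globalise. Let $S \subset M_{K,f}$ be the finite set of places where no $\Phi_\p$ is given; since $S$ and $\varphi(S)$ are finite, they meet only finitely many rational primes. For every rational prime $p$ outside this finite exceptional set, $\varphi$ maps $\{\p \mid p\}$ into $\{\q \mid p\}$ and $\varphi^{-1}$ maps $\{\q \mid p\}$ into $\{\p \mid p\}$, so, $\varphi$ being a global bijection, it restricts to a bijection between the primes of $K$ above $p$ and those of $L$ above $p$ that preserves local degrees. Finally I would invoke ramification: all but finitely many $p$ are unramified in both $K$ and $L$, and for such $p$ every $\p \mid p$ has $e(\p)=1$, whence $f(\p) = e(\p)f(\p)$ equals the local degree, and likewise in $L$. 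Thus for almost all $p$ the degree-preserving bijection supplied by $\varphi$ becomes residue-degree preserving, i.e.\ the decomposition types of $p$ in $K$ and $L$ coincide; by the definition recalled in \ref{deflocal} this is exactly arithmetic equivalence.

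The only real subtlety lies in the first step: the additive group sees only the product $e(\p)f(\p)$ and not $e(\p)$ and $f(\p)$ individually, so it cannot pin down the isomorphism type of $K_\p$ at ramified primes. The argument nonetheless succeeds because arithmetic equivalence is insensitive to the finitely many ramified primes, where $e(\p)=1$ fails---whereas the strictly stronger notion of local isomorphism in Proposition \ref{Klingen}(i), which demands a matching at \emph{every} place, genuinely cannot be reconstructed from additive data alone.
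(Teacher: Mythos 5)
Your proof is correct, and its overall skeleton matches the paper's: both arguments reduce the statement to showing that a topological group isomorphism $(K_{\p},+)\cong(L_{\varphi(\p)},+)$ forces the residue characteristics to agree and the local degrees to satisfy $e(\p)f(\p)=e(\varphi(\p))f(\varphi(\p))$, and both then conclude by passing to the cofinitely many rational primes unramified in both fields, where local degree equals residue degree and hence decomposition types coincide. Where you genuinely differ is in how the local step is carried out. The paper transports a specific lattice through the isomorphism: it uses density of $\Z^n$ in $\Z_p^n$ and the homeomorphism property to show $\Phi_{\p}(\Z_p^n)=\overline{\Phi_{\p}(\Z^n)}\cong\Z_q^{n'}$, then invokes a divisibility dichotomy (the only $p$-divisible subgroup of $(\Z_p^n,+)$ is trivial, while every element is $q$-divisible for $q\neq p$; Lemma 52.6 of \cite{classicalfields}) to force $p=q$, and finally reduces modulo $p$ and counts to get $n=n'$, with a separate invertibility remark needed to conclude $n=m$. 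You instead exhibit $p$ and $n$ as \emph{intrinsic invariants} of the topological group: every compact open subgroup $U$ of $(K_{\p},+)$ is a full-rank $\Z_p$-lattice, so $p$ is the least index of a proper open subgroup of $U$, and $n=\log_p[U:pU]$. This buys you two things: invariance in both directions is automatic (no separate argument for $m\leq n$), and you avoid both the closure computation and the divisibility lemma, at the cost of needing the classification of compact open subgroups of $(K_{\p},+)$ as $\Z_p$-lattices, which is standard and which you sketch correctly (such a subgroup is a closed $\Z_p$-submodule, squeezed between two full-rank lattices by openness and compactness). Your globalization is also slightly more careful than the paper's: you verify explicitly that, away from a finite set of rational primes, $\varphi$ restricts to a degree-preserving bijection between the primes of $K$ and of $L$ above each $p$, a point the paper leaves implicit before citing Proposition \ref{Klingen}.
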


\begin{proof}
We may view each $K_{\p}$ as a $[K_{\p}:\Q_p] =e(\p)f(\p)$-dimensional topological $\Q_p$-vector space, where $p \in \Q$ is the prime lying below $\p$. Similarly, for  $q \in \Q$ the prime lying below $\varphi(\p) \in L_{\varphi(\p)}$, we find that $L_{\varphi(\p)}$ is an $[L_{\varphi(\p)}:\Q_q] =e(\varphi(\p))f(\varphi(\p))$-dimensional topological $\Q_q$-vector space. We will write $n = e(\p)f(\p)$ and $m = e(\varphi(\p))f(\varphi(\p))$. Thus, we have an isomorphism of topological groups
\[
\Phi_{\p} \colon (\Q_p^{n},+) \isomto (\Q_{q}^{m},+).
\]
which must map $\Z^n$ injectively and discretely onto a subgroup of $\Q^m_q$ of the form $\bigoplus_{i=1}^n \nu_i \Z$, where the $\nu_i$ are $\Z$-linearly independent.

We indicate topological closure by a bar. Since the group of integers $\Z$ is dense in both $\Z_p$ and $\Z_q$ and $\Phi_{\p}$ is a homeomorphism, we have 
\[
\Phi_{\p}(\Z_p^n) = \Phi_{\p}(\overline{\Z}^n) = \overline{\Phi_{\p}(\Z^n)} = 
\overline{\bigoplus_{i=1}^n \nu_i \Z} = \sum_{i=1}^n \nu_i \Z_q \cong \Z_q^{n'},
\]
where $n' \leq n$. In the last step, we have used that since $\Z_{p}$ is a principal ideal domain, any submodule of the free module $\Z_{p}^n$ is free. Thus, $\Phi_{\p}$ restricts to a group isomorphism 
\[
\Phi'_{\p} \colon (\Z_p^{n},+) \isomto (\Z_{q}^{n'},+).
\]
We know (cf. Lemma 52.6 of \cite{classicalfields}) that the only $p$-divisible subgroup of $(\Z_p^n,+)$ is $\{ 0 \}$, whereas for $q \neq p$, every element of $(\Z_p^n,+)$ is $q$-divisible. This group theoretic property ensures that $p = q$, that is, 
\[
\Phi'_{\p} \colon (\Z_p^{n},+) \isomto (\Z_{p}^{n'},+) \subset (\Q_p^m,+).
\]
As a group homomorphism, $\Phi'_{\p}$ preserves the subgroup $p \Z_p^n$, and by considering the quotient, we find an isomorphism $(\mathbf{F}_p^n,+) \cong (\mathbf{F}_p^{n'},+)$, which, by counting elements, implies that $n=n'$.  Moreover, we see that $n=n' \leq m$, and since the isomorphism is invertible, we also obtain $m \leq n$, hence $n = m$. 

We conclude that for all non-archimedean places $\p \in M_{K,f}$, we must have $$e(\p)f(\p) = e(\varphi(\p))f(\varphi(\p)).$$ At all but finitely many primes $p$, both $K$ and $L$ are unramified, so the local maps $\Phi_{\p}$ will ensure that $f(\p) = f(\varphi(\p))$ for all but finitely many residue field degrees $f(\p)$. By Proposition \ref{Klingen}, this implies that $K$ and $L$ are arithmetically equivalent.
\end{proof} 

\begin{remark} \label{remV} If all but finitely many residue field degrees of $K$ and $L$ match, then in fact \emph{all} residue field degrees match, by a result of Perlis (the equivalence of (b) and (c) in Theorem 1 in  \cite{MR0447188}). This in turn implies that all ramification degrees match. So whereas two arithmetically equivalent number fields may have different ramification degrees at finitely many places, the above isomorphism excludes this possibility. However, this is still weaker than local isomorphism, since the ramification degree does not uniquely determine the ramified part of a local field extension.
\end{remark}

\subsection*{The multiplicative group of adeles} 

\begin{proposition}\label{mult} If $H$ is a number field with $r_1$ real and $r_2$ complex places, then there is a topological group isomorphism
\[ (\A_H^*,\cdot) \cong (\R^*)^{r_1} \times (\C^*)^{r_2} \times \left( \bigoplus_{\Z} \Z\right) \times \hat{\Z}^{[H:\Q]} \times \prod_{\p \in M_{H,f}} (\bar H_{\p}^* \times \mu_{p^\infty}(H_{\p}))  \]
where $\bar H_{\p}^*$ is the multiplicative group of the residue field of $H$ at $\p$ (a cyclic group of order $p^{f(\p)}-1$) and $\mu_{p^\infty}(H_{\p})$ is the (finite cyclic $p$-)group of $p$-th power roots of unity in $H_{\p}$.
\end{proposition}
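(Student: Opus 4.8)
The plan is to decompose the idele group place by place, insert the structure theorem for local unit groups, and then regroup the factors. Writing $\A_H^* = \prod'_{\p \in M_H}(H_\p^*, \cO_{H,\p}^*)$ as a restricted product, I would first peel off the archimedean places, which contribute exactly the factor $(\R^*)^{r_1} \times (\C^*)^{r_2}$, leaving the finite idele group $\A_{H,f}^*$ to be analysed.

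At each finite place $\p$ above the rational prime $p$, I would use the splitting $H_\p^* \cong \Z \times \cO_{H,\p}^*$ coming from a choice of uniformiser (the $\Z$ recording the valuation), together with the standard structure theorem for the unit group of a local field of residue characteristic $p$ and local degree $[H_\p:\Q_p]=e(\p)f(\p)$, namely
$$\cO_{H,\p}^* \cong \bar H_\p^* \times \mu_{p^\infty}(H_\p) \times \Z_p^{e(\p)f(\p)}.$$
Here the prime-to-$p$ torsion is the Teichm\"uller lift of the multiplicative group of the residue field, cyclic of order $p^{f(\p)}-1$, and the group of principal units splits as a product of its free $\Z_p$-part of rank $e(\p)f(\p)$ and its $p$-power torsion $\mu_{p^\infty}(H_\p)$. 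I would want each of these identifications to be a homeomorphism for the respective topologies.

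Next I would re-assemble the restricted product. Since an idele lies in $\cO_{H,\p}^*$ at all but finitely many places, the valuation components assemble into the direct sum $\bigoplus_{\p \in M_{H,f}} \Z \cong \bigoplus_\Z \Z$ (the fractional ideal group, discrete), while the unit components form the full product $\prod_\p \cO_{H,\p}^*$. Collecting the free $\Z_p$-parts by grouping the places above each rational prime $p$ and invoking the fundamental identity $\sum_{\p \mid p} e(\p)f(\p) = [H:\Q]$ gives $\prod_{\p \mid p}\Z_p^{e(\p)f(\p)} \cong \Z_p^{[H:\Q]}$, whence $\prod_p \Z_p^{[H:\Q]} \cong \hat\Z^{[H:\Q]}$. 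The remaining torsion factors $\bar H_\p^* \times \mu_{p^\infty}(H_\p)$ are left intact as the final product, yielding the claimed isomorphism.

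The main obstacle is establishing the local structure theorem $\cO_{H,\p}^* \cong \bar H_\p^* \times \mu_{p^\infty}(H_\p) \times \Z_p^{e(\p)f(\p)}$ as an isomorphism of \emph{topological} groups; this rests on the $p$-adic logarithm, which identifies a finite-index subgroup of the principal units with the additive group $(\cO_{H,\p},+) \cong \Z_p^{e(\p)f(\p)}$, and on extracting $\mu_{p^\infty}(H_\p)$ as a complementary direct factor. A secondary point requiring care is that all the regroupings above respect the topologies: the compact part $\prod_\p \cO_{H,\p}^*$ is the maximal compact subgroup of $\A_{H,f}^*$ and the valuation quotient $\bigoplus_\Z \Z$ is discrete, so the extension splits topologically, and the identification $\prod_p \Z_p \cong \hat\Z$ is a homeomorphism of profinite groups.
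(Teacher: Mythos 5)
Your proof is correct and takes essentially the same approach as the paper's: peel off the archimedean factor, split $\A_{H,f}^* \cong \bigl(\bigoplus_{\Z} \Z\bigr) \times \prod_{\p} \cO_{H,\p}^*$ (fractional ideals times unit ideles), and apply the local structure theorem $\cO_{H,\p}^* \cong \bar H_{\p}^* \times \mu_{p^\infty}(H_{\p}) \times \Z_p^{[H_{\p}:\Q_p]}$, which the paper simply quotes from Hasse. Your explicit regrouping of the $\Z_p$-factors via $\sum_{\p \mid p} e(\p)f(\p) = [H:\Q]$ into $\hat{\Z}^{[H:\Q]}$ is left implicit in the paper, but this is a presentational difference, not a different method.
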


\begin{proof} 
We have $$\A^*_H \cong (\R^*)^{r_1} \times (\C^*)^{r_2} \times \A^*_{H,f}$$ and $$\A^*_{H,f} \cong J_H \times \hat\cO_H^*.$$ Here, $J_H$ is the topologically discrete group of fractional ideals of $H$, so $J_H \cong \bigoplus_{\Z} \Z$, where the index runs over the set of prime ideals, and the entry of $\n \in J_H$ corresponding to a prime ideal $\p$  is given by $\mathrm{ord}_{\p}(\n)$. Furthermore,   $$\hat\cO_H^* = \prod_{\p \in M_{H,f}} \cO^*_{H,\p}$$ is the group of finite idelic units. To determine the isomorphism type of the latter, we quote \cite{Hasse}, Kapitel 15: let $\pi_{\p}$ be a local uniformizer at $\p$ and let $\bar{H}_{\p} = \cO_{H,\p}/\p$ denote the residue field; then the unit group is
\begin{equation} \label{unit} \cO^*_{H,\p} \cong \bar H_{\p}^* \times (1 + \pi_{\p}\cO_{H,\p}) \end{equation}
and the one-unit group \begin{equation} \label{oneunit} 1 + \pi_{\p}\cO_{H,\p} \cong \Z_p^{[H_{\p}:\Q_p]} \times \mu_{p^\infty}(H_{\p}) \end{equation} 
where  $\mu_{p^\infty}(H_{\p})$ is the group of $p$-th power roots of unity in $H_{\p}$. 
\end{proof}

 It remains to determine the exact structure of the $p$-th power roots of unity, e.g.:
 
 \begin{example}[\cite{AngSte}, Lemma 3.1 and Lemma 3.2] \label{m} If $H\neq \Q(i)$ and $H \neq \Q(\sqrt{-2})$, then there is an isomorphism of topological groups
$$\prod_{\p \in M_{H,f}} (\bar H_{\p}^* \times \mu_{p^\infty}(H_{\p}))  \cong  \prod_{n \geq 1} \Z/n\Z.$$ Hence we conclude: \emph{If $K$ and $L$  are two imaginary quadratic number fields different from $\Q(i)$ and $\Q(\sqrt{-2})$, then we have a topological group isomorphism $\A_K^* \cong \A_L^*$.}
\end{example} 

Combining Proposition \ref{a} and Example \ref{m}, we obtain the claim made in Example \ref{e1} in the introduction: 

\begin{corollary}
 For any two imaginary quadratic number fields $K$ and $L$ different from $\Q(i)$ and $\Q(\sqrt{-2})$ and for any integers $r$ and $s$, there are topological group isomorphisms
\[
(\A_{K,f})^r \times (\A_{K,f}^*)^s \cong (\A_{L,f})^r \times (\A_{L,f}^*)^s.
\]
and
$$ (\A_K)^r \times (\A_K^*)^s \cong (\A_L)^r \times (\A_L^*)^s. \qed $$
\end{corollary}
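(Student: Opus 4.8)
The plan is to establish the additive and multiplicative isomorphisms separately and then combine them by taking direct products, exploiting that both $K$ and $L$ are imaginary quadratic, hence of degree $2$ over $\Q$ with $r_1 = 0$ and $r_2 = 1$.

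For the additive factors, I would invoke Proposition \ref{a}: since $[K:\Q] = [L:\Q] = 2$, there are topological group isomorphisms $(\A_{K,f},+) \cong (\A_{\Q,f}^2,+) \cong (\A_{L,f},+)$ and likewise $(\A_K,+) \cong (\A_\Q^2,+) \cong (\A_L,+)$. Taking the $r$-th cartesian power preserves these isomorphisms, giving $(\A_{K,f})^r \cong (\A_{L,f})^r$ and $(\A_K)^r \cong (\A_L)^r$.

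For the multiplicative factors, the full-adelic statement $\A_K^* \cong \A_L^*$ is exactly the conclusion of Example \ref{m}. To obtain the finite-adelic version, I would read off from the explicit decomposition in Proposition \ref{mult} that
$$\A_{H,f}^* \cong \left(\bigoplus_\Z \Z\right) \times \hat\Z^{[H:\Q]} \times \prod_{\p \in M_{H,f}} (\bar H_{\p}^* \times \mu_{p^\infty}(H_{\p})),$$
and observe that each of the three factors is independent of the particular imaginary quadratic field $H \neq \Q(i), \Q(\sqrt{-2})$: the first factor is the free abelian group on a countable set; the second depends only on the degree $[H:\Q] = 2$; and the last is isomorphic to $\prod_{n \geq 1} \Z/n\Z$ by Example \ref{m}. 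Hence $\A_{K,f}^* \cong \A_{L,f}^*$, and taking the $s$-th power yields $(\A_{K,f}^*)^s \cong (\A_{L,f}^*)^s$, while Example \ref{m} directly gives $(\A_K^*)^s \cong (\A_L^*)^s$.

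Taking the direct product of the additive isomorphism (to the $r$-th power) with the multiplicative isomorphism (to the $s$-th power) then yields both claimed isomorphisms. The only point requiring care is that one should \emph{not} try to deduce $\A_{K,f}^* \cong \A_{L,f}^*$ by cancelling the archimedean factor $(\C^*)^{r_2}$ from the isomorphism $\A_K^* \cong \A_L^*$, since cancellation can fail for infinite abelian groups; instead the finite-idelic isomorphism must be established directly from the factor-by-factor description above, which is the main (minor) obstacle.
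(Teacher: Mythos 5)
Your proof is correct and takes essentially the same route as the paper, which obtains the corollary by combining Proposition \ref{a} (for the additive factors) with Proposition \ref{mult} and Example \ref{m} (for the multiplicative factors) and taking cartesian powers and products. Your explicit derivation of the finite-idelic isomorphism $\A_{K,f}^* \cong \A_{L,f}^*$ from the factor-by-factor decomposition, rather than by cancelling the archimedean factor $\C^*$, is precisely the (implicit) content of the paper's argument, made careful.
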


On the other hand, we again have a ``local'' result (and Remark \ref{remV} also applies in this case): 

\begin{proposition}
Let $K$ and $L$ be number fields such that there is a bijection $\varphi \colon M_{K,f} \rightarrow M_{L,f}$ with, for almost all places $\p$, isomorphisms of topological groups $\Phi_{\p} \colon (K^*_{\p},\cdot) \isomto (L^*_{\varphi(\p)},\cdot)$. Then $K$ and $L$ are arithmetically equivalent.
\end{proposition}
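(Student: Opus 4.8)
The plan is to mirror the additive case, replacing the linear structure by the multiplicative local structure recorded in Proposition \ref{mult}, equations \eqref{unit} and \eqref{oneunit}. For a non-archimedean place $\p$ of $K$ above the rational prime $p$, these give a topological isomorphism
\[
\cO_{K,\p}^* \cong \bar K_\p^* \times \Z_p^{e(\p)f(\p)} \times \mu_{p^\infty}(K_\p),
\]
in which $\bar K_\p^*$ is cyclic of order $p^{f(\p)}-1$ (prime to $p$) and $\mu_{p^\infty}(K_\p)$ is a finite cyclic $p$-group. Since $\cO_{K,\p}^*$ is the unique maximal compact subgroup of $K_\p^*$ (any compact subgroup has trivial, hence finite, image under the valuation $K_\p^* \twoheadrightarrow \Z$, so lies in its kernel $\cO_{K,\p}^*$), the topological isomorphism $\Phi_\p$ must carry $\cO_{K,\p}^*$ onto $\cO_{L,\varphi(\p)}^*$. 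Thus it suffices to recover $p$ and $f(\p)$ from the topological group $\cO_{K,\p}^*$ alone.

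First I would recover the residue characteristic. Being a profinite abelian group, $\cO_{K,\p}^*$ is the product of its pro-$\ell$ Sylow subgroups over all primes $\ell$, a decomposition that is canonical and hence preserved by any topological isomorphism. By the displayed formula the pro-$p$ component is $\Z_p^{e(\p)f(\p)} \times \mu_{p^\infty}(K_\p)$, which is infinite, whereas for every prime $\ell \neq p$ the pro-$\ell$ component is a subgroup of the finite group $\bar K_\p^*$, hence finite. So $p$ is characterised group-theoretically as the unique prime whose Sylow subgroup of $\cO_{K,\p}^*$ is infinite; comparing with the same statement for $\cO_{L,\varphi(\p)}^*$ forces the residue characteristics to agree, i.e.\ $\varphi(\p)$ lies above $p$ as well.

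Next I would recover the residue degree. With $p$ now identified on both sides, the product of the pro-$\ell$ Sylow subgroups for $\ell \neq p$ is precisely the prime-to-$p$ torsion subgroup $\bar K_\p^*$, of order $p^{f(\p)}-1$. As $\Phi_\p$ matches these, $p^{f(\p)}-1 = p^{f(\varphi(\p))}-1$, whence $f(\p)=f(\varphi(\p))$. Running this over the (co-finitely many) places at which $\Phi_\p$ is assumed to exist, the bijection $\varphi$ preserves both the rational prime below and the residue degree at almost all $\p$. A routine bookkeeping argument --- excluding the finitely many rational primes below the exceptional set and below its $\varphi$-image --- then shows that for all but finitely many $p$, the map $\varphi$ restricts to a residue-degree-preserving bijection between the primes of $K$ above $p$ and those of $L$ above $p$; that is, the decomposition types of $p$ in $K$ and $L$ coincide for almost all $p$. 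By Proposition \ref{Klingen}(ii) this is exactly arithmetic equivalence of $K$ and $L$.

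The main obstacle is the extraction of $p$ and $f(\p)$ purely from the group $\cO_{K,\p}^*$: as an abstract group the factor $\Z_p$ does not obviously remember $p$, so it is essential to exploit the topology, via the unique maximal compact subgroup and the canonical pro-$\ell$ Sylow decomposition of the resulting profinite abelian group. Once $p$ and $f(\p)$ are in hand, the passage to arithmetic equivalence is the same elementary bookkeeping (and appeal to Proposition \ref{Klingen}) as in the additive case; indeed it is somewhat cleaner here, since the residue degree $f(\p)$ is recovered directly rather than only the product $e(\p)f(\p)$.
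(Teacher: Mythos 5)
Your proof is correct, but it takes a genuinely different route from the paper's. The paper's own argument kills the torsion: from \eqref{unit} and \eqref{oneunit} it passes to $K_\p^*/K_{\p,\mathrm{tors}}^* \cong \Z \times \Z_p^{[K_\p:\Q_p]}$, then compares both sides modulo $p$-th powers; since $(\Z \times \Z_q^m)/p(\Z\times\Z_q^m)$ collapses to $\Z/p\Z$ when $q \neq p$, a counting argument forces $p=q$ and $[K_\p:\Q_p]=[L_{\varphi(\p)}:\Q_p]$, and the residue degrees are then matched only via the remark that almost all places are unramified, where $f(\p)=e(\p)f(\p)$. You instead keep the torsion and localize the comparison inside the maximal compact subgroup: you characterise $\cO_{K,\p}^*$ topologically (a step the paper never takes), identify $p$ as the unique prime with infinite pro-$\ell$ Sylow factor of this profinite group, and read off $f(\p)$ from the order $p^{f(\p)}-1$ of the prime-to-$p$ torsion. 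All of these steps are sound, and the final bookkeeping (discarding rational primes below the exceptional set and below its $\varphi$-image) is the same in both proofs, as is the appeal to Proposition \ref{Klingen}(ii). The trade-off is this: your method recovers $f(\p)=f(\varphi(\p))$ at \emph{every} place where $\Phi_\p$ exists, ramified or not (in the spirit of Remark \ref{remV}), so the unramifiedness step disappears, and pushing slightly further (the $\Z_p$-rank of the pro-$p$ part modulo torsion) it would recover $e(\p)$ too; on the other hand, the paper's manipulations (torsion quotient, reduction mod $p$, counting) use only the \emph{abstract} group structure, so its proof actually establishes the stronger statement that abstract group isomorphisms $\Phi_\p$ already imply arithmetic equivalence, whereas your argument genuinely needs the topology, both for the maximal compact subgroup and for the invariance of the Sylow decomposition.
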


\begin{proof}
From (\ref{unit}) and (\ref{oneunit}), for a given $\p$ lying above $p\in \Q$, we find that
\[
K^*_{\p} \cong \Z \times \bar{K}^*_{\p} \times \Z_p^{[K_{\p}:\Q_p]} \times \mu_{p^\infty}(K_{\p}).
\]
Dividing out by the torsion elements yields
\[
K^*_{\p}/K^*_{\p,\mathrm{tors}} \cong \Z \times \Z_p^{[K_{\p}:\Q_p]}.
\]
Hence, $\Phi_{\p}$ will induce a map
\[
\Phi'_{\p} \colon K^*_{\p}/K^*_{\p,\mathrm{tors}} \isomto L^*_{\varphi(\p)}/L^*_{\varphi(\p),\mathrm{tors}}
\]
which equals
\[
\Phi'_{\p} \colon \Z \times \Z_p^{[K_{\p}:\Q_p]} \isomto \Z \times \Z_q^{[L_{\varphi(\p)}:\Q_q]}
\]
where $q \in \Q$ is the rational prime below $\varphi(\p)$. Now we form the quotient
\begin{equation}\label{p}
 (\Z \times \Z_p^{[K_{\p}:\Q_p]})/p\cdot(\Z \times \Z_p^{[K_{\p}:\Q_p]}) \cong \Z/p\Z \times (\Z/p\Z)^{[K_{\p}:\Q_p]}
\end{equation}
which, under $\Phi'_{\p}$, will map isomorphically onto
\begin{equation}\label{q}
(\Z \times \Z_q^{[L_{\varphi(\p)}:\Q_q]})/p\cdot(\Z \times \Z_q^{[L_{\varphi(\p)}:\Q_q]}) = 
\begin{cases}
\Z/p\Z \times (\Z/q\Z)^{[L_{\varphi(\p)}:\Q_q]} & \textrm{ if } p = q \\
  \Z/p\Z & \textrm{ if } p \neq q
\end{cases}.
\end{equation}
Thus, from comparing the right hand sides of equations (\ref{p}) and (\ref{q}), a counting argument shows that we must have $p=q$ and $[K_{\p}:\Q_p] = [L_{\varphi(\p)}:\Q_p]$. For all but finitely many primes, the extensions $K_{\p}/\Q_p$ and $L_{\varphi(\p)}/\Q_p$ are unramified.
Hence, we find that the bijection $\varphi$ matches the decomposition types of all but finitely many primes.  By Proposition \ref{Klingen}, this implies that $K$ and $L$ are arithmetically equivalent.
\end{proof}

\section{Set-up from algebraic groups and the notion of fertility} \label{nice}

In this section, we set up notations and terminology from the theory of algebraic groups, and we elaborate on the notion of a group being fertile for a pair of number fields. 

\subsection*{Algebraic groups}

\begin{nd} Let $G$ denote a 
linear (viz., affine) algebraic group. We denote the multiplicative group by $\G_m$ and the additive group by $\G_a$. An \emph{$n$-dimensional torus} $T$ is an algebraic subgroup of $G$ which is isomorphic, over $\bar \Q$, to $\G_m^r$, for some integer $r$. When $T$ is maximal, $r$ is the \emph{rank} of $T$ and $G$. Suppose that the maximal tori \emph{split} over a field $F/\Q$, meaning that there exists an isomorphism $T \cong \G_m^r$ defined over $F$. All split maximal tori of $G$ are $G(F)$-conjugate and have the same dimension, called the \emph{rank} of $G$. A subgroup $U$ of $G$ is \emph{unipotent} if $U(\bar \Q)$ consists of unipotent elements. Every unipotent subgroup of $G$ splits over $\Q$, meaning that it has a composition series over $\Q$ in which every successive quotient is isomorphic to $\G_a$. Alternatively, it is isomorphic over $\Q$ to a subgroup of a group of strictly upper triangular matrices. Any connected group $G$ that is not unipotent contains a non-trivial torus. A \emph{Borel subgroup} $B$ of $G$ is a maximal connected solvable subgroup of $G$. If all successive quotients in the composition series of $B$ over $F$ are isomorphic to $\G_a$ or $\G_m$, then $B$ is conjugate, over $F$, to a subgroup of an upper triangular matrix group, by the Lie-Kolchin theorem. Moreover, over $F$, for some split maximal torus $T$ and maximal unipotent group $U$, we can write $B \cong T \ltimes U$ as a semi-direct product induced by the adjoint representation $\rho \colon T \rightarrow \mathrm{Aut}(U)$ (i.e., by the conjugation action of $T$ on $U$). Furthermore, given $U$, $B$ is the normaliser of $U$ in $G$, and $T \cong B/U$. 
\end{nd} 

\begin{definition} \label{fertile}
We call  a linear algebraic group $G$ over~$\Q$ \emph{fertile} for a number field $K$ if the following conditions hold: 
\begin{enumerate} 
\item[(i)] The Borel groups $B=T \ltimes U$ of $G$ are split over $K$ (viz., $K \supseteq F$ where $F$ is the splitting field of a maximal torus in $G$);
\item[(ii)] over $K$, $T \neq\{1\}$ acts non-trivially (by conjugation) on the abelianisation $U^{\ab}$ of $U \neq \{0\}$. 
\end{enumerate}
\end{definition} 

The following equivalent definition was pointed out to us by Wilberd van der Kallen: 

\begin{proposition} \label{Wilb} $G$ is fertile over $K$ if and only if it splits over $K$ and the connected component of the identity $G^0$ is not a direct product $T \times U$ of a torus and a unipotent group.
\end{proposition}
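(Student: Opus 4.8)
The plan is to prove the equivalence by unwinding both definitions into a common structural statement about the adjoint action of the torus on the unipotent radical. First I would reduce to the connected case: since Borel subgroups and maximal tori of $G$ live inside the connected component $G^0$, and fertility is a condition on $B = T \ltimes U$, both sides of the claimed equivalence depend only on $G^0$. Thus I may assume $G = G^0$ is connected, and I must show that (a $K$-split connected $G$) is fertile for $K$ exactly when $G$ is \emph{not} of the form $T \times U$. The splitting condition (i) in Definition \ref{fertile} matches the hypothesis ``splits over $K$'' on the right, so the real content is that condition (ii) — nontriviality of the conjugation action of $T$ on $U^{\ab}$ — is equivalent to $G$ not being a direct product $T \times U$.

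\medskip

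The key step is to show that for a connected split solvable Borel $B = T \ltimes U$, the semidirect product is direct if and only if $T$ acts trivially on $U^{\ab}$. One direction is immediate: if $B = T \times U$ then $T$ centralises $U$, hence acts trivially on $U$ and a fortiori on $U^{\ab}$. For the converse, suppose $T$ acts trivially on $U^{\ab} = U/[U,U]$; I would argue that $T$ then centralises all of $U$, so that the semidirect product is direct. The mechanism is that the torus action is completely reducible (a rational representation of a torus decomposes into weight spaces / one-dimensional characters), so the action of $T$ on the graded pieces of the lower central series of $U$ is governed by the action on $U^{\ab}$: the weights occurring in $[U,U]/[[U,U],U]$ and further down are sums of weights occurring in $U^{\ab}$. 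Concretely, on the Lie algebra $\mathfrak{u}$ the bracket is $T$-equivariant, so $[\mathfrak{u},\mathfrak{u}]$ is spanned by brackets of weight vectors, whose weights are sums of the weights on $\mathfrak{u}^{\ab}$; if all the latter are trivial then all weights on $\mathfrak{u}$ are trivial, forcing the $T$-action on $U$ to be trivial. Since unipotent groups in characteristic zero are determined by their Lie algebra via the exponential, the trivial action on $\mathfrak{u}$ gives a trivial conjugation action on $U$, whence $T$ and $U$ commute and $B \cong T \times U$. Finally, the passage from $B = T \times U$ to $G = T \times U$ (as opposed to merely the Borel being a direct product) requires noting that if $B$ is itself its own Borel with $T$ central, then $G$ being connected solvable-or-reductive with a central maximal torus forces $G = B$; more carefully, the whole point is that $G$ has $B$ as a Borel, and the decomposition $B = T \times U$ with $U$ the unipotent radical of $B$ extends to $G$ precisely when $G$ is solvable, which it is exactly in the relevant case — here I would lean on the standard structure theory that a connected group whose Borel is nilpotent (equivalently $T$ central in $B$) is itself nilpotent, i.e.\ $G = B$.

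\medskip

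The main obstacle I anticipate is the last passage from a statement about the Borel $B$ to a statement about $G$ itself. The right-hand side of Proposition \ref{Wilb} asserts $G^0 \neq T \times U$ for a torus and unipotent group, whereas condition (ii) is phrased entirely inside a single Borel. When $T$ acts nontrivially on $U^{\ab}$ one must produce a genuine obstruction to $G^0$ being \emph{any} direct product of a torus and a unipotent group, not merely to the specific $B$ being a direct product; conversely, when $G^0 = T' \times U'$ one must check that the Borel inherits the product structure with $T$ acting trivially. The cleanest route is probably to observe that $G^0 = T' \times U'$ (torus times unipotent) is equivalent to $G^0$ being nilpotent with $U' = R_u(G^0)$ its unipotent radical and $T'$ a central maximal torus, and that this in turn is equivalent to $B = G^0$ being nilpotent, i.e.\ to $T$ centralising $U$. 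I would therefore phrase the proof around the chain of equivalences: ``$T$ acts trivially on $U^{\ab}$'' $\iff$ ``$T$ centralises $U$'' $\iff$ ``$B$ is nilpotent'' $\iff$ ``$G^0$ is nilpotent'' $\iff$ ``$G^0 = T \times R_u(G^0)$'', taking the splitting hypothesis as given throughout, and citing the standard structure theory (e.g.\ that a connected linear algebraic group equals its own Borel iff it is nilpotent/solvable) for the middle equivalences.
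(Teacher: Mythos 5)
Your proof is correct, and it is organised differently from the paper's. The paper proves the backward implication by writing $1 \to R_u(G) \to G^0 \to S \to 1$ with $S$ reductive and splitting into two cases: if $S$ is not a torus, it has a Borel $\overline{B}=\overline{T}\ltimes\overline{U}$ with a nontrivial $\overline{T}$-eigenspace in $\overline{U}^{\ab}$, which lifts along the $T$-equivariant surjection $U^{\ab}\to\overline{U}^{\ab}$ to give fertility directly; if $S$ is a torus, then $G^0$ is connected solvable, hence equal to its own Borel, and the paper then proves exactly your key lemma (trivial action on $U^{\ab}$ forces trivial action on $U$) by induction along the lower central series at the group level, using $T$-equivariance of commutator maps and the absence of nontrivial extensions between trivial torus modules. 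You instead pivot everything on nilpotency, via the chain ``$T$ trivial on $U^{\ab}$ $\iff$ $T$ centralises $U$ $\iff$ $B$ nilpotent $\iff$ $G^0$ nilpotent $\iff$ $G^0 = T\times R_u(G^0)$'', executing the key lemma on the Lie algebra (weights of $T$ on the graded pieces of the lower central series of $\mathfrak{u}$ are sums of weights on $\mathfrak{u}/[\mathfrak{u},\mathfrak{u}]$, then transport back by $\exp$, valid in characteristic zero) and outsourcing the non-solvable case to the classical theorem that a connected group with nilpotent Borel equals that Borel (Humphreys 21.4, Springer 6.2.9), together with the structure theorem that connected nilpotent groups are (central torus)$\times$(unipotent). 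Your route is more modular and yields the clean reformulation ``fertile $=$ split and $G^0$ not nilpotent'', at the price of citing two structure theorems as black boxes; the paper's route is more self-contained and makes visible where fertility comes from when the reductive quotient is nonabelian (root eigenspaces lifting from $S$), and its group-level induction never passes through the Lie algebra. The solvable-case argument is essentially the same in both proofs, so the genuine difference is in how the general case is reduced to it. One small point to make explicit in your write-up: Definition \ref{fertile} also demands $T\neq\{1\}$ and $U\neq\{0\}$, but these degenerate cases fold into your chain, since either condition makes $B$ trivially nilpotent and $G^0$ a (possibly trivial) product of a torus and a unipotent group, so both sides of the equivalence fail together.
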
 

\begin{proof} 
 Indeed, suppose $G$ is fertile in the sense of Definition \ref{fertile}. Since Borel groups are connected, the identity component $G^0$ contains a Borel group, which is not a direct product $T \times U$, hence neither is $G^0$. 
 
 Conversely, suppose $G^0$ is not a direct product $T \times U$. There is a short exact sequence of algebraic groups
\[
1 \to R_u(G) \to G^0 \to S \to 1
\]
where $R_u(G)$ is the unipotent radical of $G$ and $S$ is a reductive group. Moreover, we may assume that $S$ is a torus. Otherwise, $S$ would contain a Borel group $\overline{B}$ whose abelian unipotent group $\overline{U}^{\mathrm{ab}}$ contains a non-trivial eigenspace for the maximal torus $\overline{T} \subset \overline{B}$. This eigenspace lifts to a non-trivial eigenspace inside $U^{\mathrm{ab}}$, so $G$ is fertile in the sense of Definition \ref{fertile}.

Thus, we may assume $G^0$ is itself a Borel subgroup (since it is solvable and connected, and maximal for these properties) with torus $S = T$ and unipotent subgroup $U = R_u(G)$, and consider the short exact sequence
\[
1 \to U \to B \to T \to 1.
\]
Now we claim that $T$ acts non-trivially on $U$ if and only if $T$ acts non-trivially on $U^{\mathrm{ab}}$, which will finish the proof.
Necessity is clear.
Conversely, suppose that $T$ acts non-trivially on $U$. Let $\{U_i\}_i$ be the lower central series for $U$, i.e. $U_0 = U$ and $U_i = [U,U_{i-1}]$ for all $i\geq 1$. Let $U_n$ be the last non-trivial subgroup occurring in this series. Then in particular $U_n \subseteq Z(U)$. Since $T$ acts by conjugation, it preserves the lower central series. We will show that $T$ acts non-trivially on $U^{\mathrm{ab}} = U/U_1$.\\
Since $U_k = \{0\}$ for $k > n$, we have $U/U_k = U$ for such $k$, on which $T$ acts non-trivially by assumption. We will show that if $T$ acts non-trivially on some $U/U_j$ with $j \geq 2$, then it acts non-trivially on $U/U_{j-1}$. This induction argument will then show that $T$ acts non-trivially on $U/U_1$, as required.

Suppose by contrapositive that $T$ acts trivially on some $U/U_{j-1}$. We will show that $T$ the also acts trivially on $U/U_{j}$. Indeed, $T$ acts trivially on the subgroup $U_{j-2}/U_{j-1} \leq U/U_{j-1}$. The commutator map $[\cdot,\cdot] \colon U \times U_{j-2} \to U_{j-1}$ factors through to give a map $$[\cdot,\cdot] \colon U/U_{j-1} \times U_{j-2}/U_{j-1} \to U_{j-1}/U_j.$$ Hence, $T$ acts trivially on $U_{j-1}/U_j$. Now consider the short exact sequence
\[
1 \to U_{j-1}/U_{j} \to U/U_{j} \to U/U_{j-1} \to 1.
\]
Since $T$ acts trivially on both $U_{j-1}/U_{j}$ and $U/U_{j-1}$, there are no non-trivial roots and $T$ acts trivially on $U/U_{j}$, as required. 
\end{proof}

\begin{examples} \mbox{ }
\begin{enumerate} 
\item[(i)] Tori and unipotent groups are \emph{not} fertile for any $K$, and neither are direct product of such groups. 

\item[(ii)] The general linear group $\GL(n)$ for $n \geq 2$ is fertile for any $K$. Here, $T$ is the group of diagonal matrices, split over $\Q$, which acts non-trivially on the group of strictly upper triangular matrices. Similarly, the Borel group of (non-strictly) upper triangular matrices is fertile. 

\item[(iii)] Let $G=\mathrm{Res}^F_{\Q} (\G_m \ltimes \G_a)$ denote the ``$ax+b$''-group of a number field $F$, as an algebraic group over $\Q$. This group is fertile for any number field $K$ that contains $F$. 
\end{enumerate} 
\end{examples}

\subsection*{Adelic point groups}

\begin{definition} \label{pointgroups} 
Let $G$ denote a linear algebraic group over $\Q$ and let $K$ a number field with adele ring $\A_K$.
As described in Section 3 of \cite{oesterle} (compare \cite{OC}) we may use any of the following equivalent definitions for the \emph{group of adelic points of $G$ over $K$} (also called the \emph{adelic point group}), denoted $G(\A_K)$:
\begin{enumerate}
\item Since $\A_K$ is a $\Q$-algebra, $G(\A_K)$ is its scheme theoretic set of points.
\item Let $S$ be a suitable finite set of places of $\Q$, including the archimedean place, and let $\mathcal{G}$ be a smooth separated group scheme of finite type over the $S$-integers $\Z_{S}$, whose generic fibre is $G$. Then 
\[ G(\A_K) = \lim_{\stackrel{\longrightarrow}{S' \supset S}} \prod_{\p \in S'} G(K_\p) \times \prod_{\p \notin S'} \mathcal{G}(\cO_\p) 
\] 
where  $S'$ runs over subsets of $M_{K,f}$ that contain divisors of primes in $S$. 

\item Choose a $\Q$-isomorphism $\varphi \colon G \hookrightarrow \mathbb{A}^N$ of $G$ onto a closed subvariety of a suitable affine space $\mathbb{A}^N$. For every $\p \in M_{K,f}$, we define $G(\mathcal{O}_{\p})$ to be the set of points $x \in G(K_{\p})$ for which $\varphi(x) \in \mathcal{O}_{\p}^N$. Then $G(\A_K)$ is the restricted product
\[
G(\A_K) = {\prod\limits_{\p \in M_{K,f}}}^{\hspace*{-2mm} \prime} \left(G(K_{\p}),G(\mathcal{O}_{\p})) \right) \times \prod_{\p \in M_{K,\infty}} G(K_{\p}) .
\]

\end{enumerate}
 The second and third definitions immediately provide $G(\A_K)$ with a topology induced from the $\p$-adic topologies. Also, the algebraic group law on $G$ induces a topological group structure on $G(\A_K)$. The definitions are, up to isomorphism, independent of the choices of $S$, $\mathcal{G}$ and $\varphi$. 
 
 We define the \emph{finite-adelic point group} $G(\A_{K,f})$ completely analogously.
\end{definition}

In Section \ref{counterex}, we considered the group of adelic points on $\G_a$ and $\G_m$, in the sense of the above definition, cf.\ Example \ref{e1} from the introduction.

\section{Divisibility and unipotency} \label{div}

In this section, we show how to characterise maximal unipotent point groups inside finite-adelic point groups in a purely group theoretic fashion, using divisibility. This is used later to deduce an isomorphism of unipotent point groups from an isomorphism of ambient point groups. 

\begin{definition} If $\mathbf H$ is a subgroup of a group $\mathbf G$, an element $h \in \mathbf H$ is called \emph{divisible in $\mathbf G$} if for every integer $n \in \Z_{>0}$, there exists an element $g \in \mathbf G$ such that $h=g^n$. The subgroup $\mathbf H \leq \mathbf G$ is called divisible (in $\mathbf G$) if all of its elements are divisible in $\mathbf G$. 
\end{definition}

\begin{proposition}\label{Urecovery} Fix a maximal unipotent algebraic subgroup $U$ of $G$. Then any maximal divisible subgroup of $G(\A_{K,f})$ is conjugate to $U(\A_{K,f})$ in $G(\A_{K,f})$. 
\end{proposition}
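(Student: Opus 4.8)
The plan is to sandwich an arbitrary maximal divisible subgroup between $U(\A_{K,f})$ and a conjugate of it, in three movements: (A) show $U(\A_{K,f})$ is itself divisible, (B) show locally that any divisible element is unipotent, and (C) glue the resulting everywhere-unipotent structure into a single adelic conjugation. For (A), recall that $U$ splits over $\Q$, so it has a nilpotent Lie algebra $\mathfrak{u}$ and, away from a finite set of primes $M$ clearing the Baker--Campbell--Hausdorff denominators, $\exp\colon \mathfrak{u}\to U$ is an isomorphism of group schemes over $\Z[1/M]$ under which the power map $g\mapsto g^n$ corresponds to $X\mapsto nX$. Since every $K_\p$ has characteristic zero, $X\mapsto nX$ is bijective on $\mathfrak{u}(K_\p)$, and on $\mathfrak{u}(\cO_\p)$ whenever $\p\nmid nM$. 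Thus for $u=(u_\p)\in U(\A_{K,f})$ the unique local roots $v_\p$ of $u_\p$ lie in $U(\cO_\p)$ for all but finitely many $\p$, so they assemble to an $n$-th root $v\in U(\A_{K,f})$. Hence $U(\A_{K,f})$ is (uniquely) divisible.

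The heart is the local lemma: \emph{every element $g$ of $G(K_\p)$ that is divisible in $G(K_\p)$ is unipotent.} Fix a $\Q$-embedding $G\hookrightarrow\GL_N$ and write the Jordan decomposition $g=su$ with $s$ semisimple, $u$ unipotent, commuting. First I would control eigenvalues: the valuations of the roots of a degree-$N$ characteristic polynomial over $K_\p$ lie in $\tfrac1N\Z$ (bounded Newton-polygon denominators), while if $g=h^n$ the eigenvalues of $h$ are $n$-th roots of those of $g$, so for every eigenvalue $\lambda$ of $g$ one has $v(\lambda)/n\in\tfrac1N\Z$ for all $n$; this forces $v(\lambda)=0$, i.e.\ $s$ has unit eigenvalues and is power-bounded. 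Next, using compatibility of the Jordan decomposition with taking powers, any $n$-th root of $g$ yields a \emph{semisimple} $n$-th root of $s$; such a root commutes with $s=\bigl((h)_s\bigr)^n$, so $s$ together with a chosen semisimple $n$-th root lies in a common maximal torus $T''$ of $Z_G(s)^0$ defined over $K_\p$ (note $s$ is central there). The compact part of $T''(K_\p)$ is a $p$-adic Lie group of the shape (finite)$\,\times\,\Z_p^{r}$, which has \emph{no} nontrivial divisible element: a nonzero vector in $\Z_p^r$ is not $p^k$-divisible for large $k$, and a nontrivial element of a finite group is not divisible by its order. Running this for a suitable $n$ (a large power of $p$ when the $\Z_p$-component is nonzero, a prime dividing the order otherwise) shows $s=1$, so $g=u$ is unipotent.

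With the local lemma in hand, globalization proceeds as follows. If $\mathbf D\le G(\A_{K,f})$ is divisible, then divisibility passes to each coordinate projection, so by the lemma every $g=(g_\p)\in\mathbf D$ is everywhere unipotent; hence each projection $\mathbf D_\p$ is a subgroup of $G(K_\p)$ consisting of unipotent elements. By Kolchin's theorem $\mathbf D_\p$ is conjugate in $G(K_\p)$ into the fixed maximal unipotent $U(K_\p)$, and at almost all $\p$ (where $\mathbf D_\p\le G(\cO_\p)$ and $G$ has good reduction) the conjugator can be taken in $G(\cO_\p)$. Assembling these conjugators gives $g\in G(\A_{K,f})$ with $g\,\mathbf D\,g^{-1}\subseteq U(\A_{K,f})$. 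Since $U(\A_{K,f})$ is divisible by (A) and $\mathbf D$ is a \emph{maximal} divisible subgroup, $g\,\mathbf D\,g^{-1}=U(\A_{K,f})$, so $\mathbf D$ is conjugate to $U(\A_{K,f})$; conjugacy of maximal unipotent subgroups makes the statement independent of the choice of $U$.

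I expect the main obstacle to be the semisimple step of the local lemma, specifically ensuring that $s\neq 1$ is genuinely ruled out despite $n$-th roots living a priori in different maximal tori: the reduction to \emph{semisimple} roots (so that each root shares a maximal $K_\p$-torus with $s$) is what makes the ``no divisible elements in a $p$-adic torus'' argument bite, and this is where the characteristic-zero and $p$-adic inputs are essential (over $\bar K_\p$ units \emph{are} divisible, so the obstruction is purely arithmetic). The secondary technical point is arranging the local conjugators to be integral at almost all places simultaneously, which relies on a smooth integral model of $G$ and on the Kolchin/Borel--Tits conjugacy holding over $\cO_\p$ at the good primes.
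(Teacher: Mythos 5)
Your overall architecture --- (A) divisibility of $U(\A_{K,f})$ via the exponential, (B) a local ``divisible $\Rightarrow$ unipotent'' lemma, (C) local conjugation into $U(K_\p)$ plus integrality of the conjugators at almost all places --- matches the paper's proof, and (A) and the skeleton of (C) are sound. The genuine gap is in the semisimple step of (B), and it is exactly the point you flag but do not actually resolve: for each $n$ your construction produces a semisimple $n$-th root $h_n$ of $s$ and a maximal torus $T''_n$ of $Z_G(s)^0$ containing $s$ and $h_n$, but $T''_n$ \emph{depends on} $n$. So all you know is that for every $n$ the fixed element $s$ is an $n$-th power in \emph{some} compact group $T''_n(K_\p)^c \cong F_n \times \Z_p^r$, where both the group and its splitting vary with $n$. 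The fact that a group of shape $(\mathrm{finite})\times\Z_p^r$ has no nontrivial divisible elements applies only to an element divisible \emph{inside one fixed such group}; $s$ has not been shown divisible in any single torus, so ``the $\Z_p$-component of $s$'' is not well-defined and the recipe ``take $n$ a large power of $p$, resp.\ a prime dividing the order'' does not parse. (Being a $p^k$-th power in some such group for every $k$ is very weak: any prime-to-$p$ root of unity has this property.) What is missing is a uniformity statement across all $n$, and that is precisely what the paper's Lemma~\ref{div-uni} supplies via Krasner's lemma: the eigenvalues of all the roots $w_n$ generate extensions of $K_\p$ of degree at most $N!$, of which there are only finitely many, so they all lie in one \emph{fixed} finite extension $L$; then the single fact $\bigcap_{n\geq 1}(L^*)^n=\{1\}$ (discreteness of the valuation plus the unit structure) forces all eigenvalues to be $1$, with no need for Jordan decompositions, centralizers, or tori. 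Your argument can be repaired (by the same compositum trick, or by a uniform bound on torsion of $K_\p$-tori of bounded dimension together with a separate argument that $s$ is torsion), but as written the key step fails. A further unaddressed point in (B): you place $h_n$ in $Z_G(s)^0$, yet when $Z_G(s)$ is disconnected (which happens for general $G$) $h_n$ may lie in a non-identity component, and then it lies in no maximal torus of $Z_G(s)^0$ at all.

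A secondary problem is in (C): you justify integrality of the local conjugators by the parenthetical ``$\D_\p \leq G(\cO_\p)$ at almost all $\p$'', which is false --- the projection of a subgroup of the restricted product to a fixed factor need not be bounded; already for $\D = U(\A_{K,f})$ one has $\D_\p = U(K_\p)$ at \emph{every} $\p$. The conclusion you want is true, but it is the real content of this step: the paper proves $G(K_\p) = B(K_\p)\,\mathcal{G}(\cO_\p)$ for almost all $\p$ (Lemma~\ref{decomp}, via fppf cohomology, $H^1(K_\p,B)=0$, Lang's theorem and Hensel's lemma), and then uses that the $B(K_\p)$-factor normalises $U$ to replace each conjugator by its $\mathcal{G}(\cO_\p)$-part. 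Two minor slips: eigenvalue valuations of a degree-$N$ characteristic polynomial have denominators bounded by $N$, so they lie in $\frac{1}{N!}\Z$ rather than $\frac{1}{N}\Z$ (harmless, as only boundedness is used); and Kolchin's theorem conjugates $\D_\p$ into the upper-triangular unipotents of $\GL_N$, not into $U(K_\p)$ inside $G(K_\p)$ --- for the latter you need, as the paper does, the Zariski-closure argument together with Borel--Tits conjugacy of maximal unipotent subgroups.
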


\begin{proof} We fix an embedding $G \hookrightarrow GL_N$ throughout, and consider elements of $G$ as matrices. We start the proof with a sequence of lemmas about the local case. Fix a place $\p \in M_{K,f}$. 

\begin{lemma} \label{div-uni}
All divisible elements of $G(K_\p)$ are unipotent.
\end{lemma}

\begin{proof}
Let $v$ denote a divisible element of $G(K_\p)$, and, for each $n \in \Z_{>0}$, let $w_n \in G(K_\p)$ satisfy $w_n^n=v$ for $n \in \Z_{>0}$. The splitting field $L_n$ of the characteristic polynomial of $w_n$ (seen as $N \times N$-matrix) has \emph{bounded} degree $[L_n:K] \leq N!$. Since by Krasner's Lemma (e.g., \cite{NSW}, 8.1.6), there are only finitely many extensions of $K_\p$ of bounded degree, the compositum $L$ of all $L_n$ is a discretely valued field, in which all the eigenvalues $\lambda_i$ of $v$ are $n$-th powers (namely, of eigenvalues of $w_n$) for all integers $n$.  Since $L$ is non-archimedean, $$\bigcap\limits_{n \geq 1} L^n = \{1\},$$ by discreteness of the absolute value and the structure of $\cO^*_{H}$ as described in the proof of Proposition \ref{mult}. We conclude that all eigenvalues of $v$ are $1$ and $v$ is unipotent. 
\end{proof}

\begin{remark}
The lemma (and hence the proposition) is not true for archimedean places. 
To give an example at a real place, the rotation group $\mathrm{SO}(2,\R) \subseteq \mathrm{SL}(2,\R)$ is divisible but contains non-unipotent elements. 
\end{remark} 

\begin{lemma} \label{uni-div}
The group $U(K_\p)$ is divisible in $G(K_\p)$. 
\end{lemma}

\begin{proof}
Since all $K_{\p}$ are fields of characteristic zero, the exponential map $$\exp \colon \mathfrak{N} \rightarrow U(K_\p)$$ from the nilpotent Lie algebra $\mathfrak{N}$ of $U(K_\p)$ to $U(K_\p)$  is an isomorphism (cf.\ \cite{milneLGS}, Theorem 6.5). For an integer $n \in \Z_{>0}$ and an element $\mathfrak{n} \in \mathfrak{N}$, $$\exp(n \mathfrak{n}) = \exp(\mathfrak{n})^n$$ by the Baker-Campbell-Hausdorff formula, since multiples of the same $\mathfrak n$ commute, so 
that (multiplicative) divisibility in the unipotent algebraic group corresponds to (additive) divisibility in the nilpotent Lie algebra $\mathfrak{N}$. Since the latter is a $K_\p$-vector space and any integer $n$ is invertible in $K_\p$, we find the result. 
\end{proof} 

We will also need the following global version: 

\begin{lemma} \label{div-global}
The group $U(\A_{K,f})$ is divisible in $G(\A_{K,f})$. 
\end{lemma}

\begin{proof}
Let $v = (v_{\p})_{\p} \in U(\A_{K,f})$, $n \in \Z_{\geq 0}$, and for every $\p \in M_{K,f}$, let $w_\p \in U(K_\p)$ be such that $w_\p^n=v_\p$ (which exists by the previous lemma). We claim that $w_\p \in U(\mathcal{O}_\p)$ for all but finitely many $\p$, which shows that $w=(w_\p)_{\p} \in U(\A_{K,f})$ and proves the lemma. Indeed, it suffices to prove that $w_\p \in GL_N(\cO_\p)$ for all but finitely many $\p$. This follows from the Taylor series 
$$ w_{\p} = \sqrt[n]{1+(v_\p-1)} = \sum_{k=0}^\infty \binom{1/n}{k}(v_\p-1)^k,$$
which is a finite sum since $v_\p-1$ is nilpotent, by noting that for fixed $n$, the binomial coefficients introduce denominators at only finitely many places.
\end{proof}

\begin{lemma} \label{conj}
Any maximal divisible subgroup of $G(K_\p)$ is conjugate to $U(K_\p)$ in $G(K_\p)$. 
\end{lemma}

\begin{proof}
Let $\D$ denote a maximal divisible subgroup of $G(K_\p)$. By Lemma \ref{div-uni}, it consists of unipotent elements. Since unipotency is defined by polynomial equations in the affine space of $N \times N$ matrices, the Zariski closure of $\D$ in $G_{K_\p}$ is a maximal unipotent algebraic subgroup $U'$ of $G_{K_\p}$.   Theorem 8.2 of Borel-Tits \cite{BorelTits} implies that there exists an element $\gamma_{\p} \in G(K_{\p})$ such that $\gamma_{\p}U'\gamma_{\p}^{-1} = U$, for $U$ any chosen maximal unipotent subgroup of $G$. 
Hence, $\gamma_{\p} \D \gamma_{\p}^{-1} \subseteq  U(K_{\p})$. On the other hand, Lemma \ref{uni-div} implies that $U(K_{\p})$ consists of divisible elements. Since $\gamma_{\p}^{-1} U(K_\p) \gamma_{\p}$ is maximal divisible, the result follows. 
\end{proof}

We could not find a proof for the following result in the literature, so we include one inspired by an answer by Bhargav Bhatt on MathOverflow {\tt mathoverflow.net/a/2231}: 

\begin{lemma} \label{decomp} 
Let $B\subset G$ denote a Borel subgroup of $G$ and let $\mathcal{B} \subset \mathcal {G}$ denote any corresponding inclusion of smooth finite-type separated group schemes over the ring of $S$-integers $\Z_S$ for a suitable finite set of primes $S$, so that the generic fibre of $\mathcal{B}$ is $B$ and that of $\mathcal {G}$ is $G$. Then for $\p \in M_{K,f}$ not dividing any prime in $S$, we have $$G(K_\p) = B(K_\p) \mathcal{G}(\cO_\p).$$ 
\end{lemma}

\begin{proof}
It suffices to show that \begin{equation} \label{quot} G(K_{\p})/B(K_{\p}) = \mathcal{G}(\mathcal{O}_{\p})/\mathcal{B}(\mathcal{O}_{\p}).\end{equation}
We will prove this by arguing that both sides of (\ref{quot}) equal $ (\mathcal{G}/\mathcal{B})(\mathcal{O}_{\p})$. 

First consider the long exact sequence in fppf-cohomology associated to the exact sequence $$0 \rightarrow \mathcal{B} \rightarrow \mathcal{G} \rightarrow \mathcal{G}/\mathcal{B} \rightarrow 0 $$ of smooth group schemes, over $M=K_\p$ or $M=\cO_\p$: 
\[
0 \to \mathcal{B}(M) \to \mathcal{G}(M) \to (\mathcal{G}/\mathcal{B})(M) \to H^1(\mathrm{Spec\, M},\mathcal{B}) \to \ldots
\]
To rewrite the left hand side of (\ref{quot}), we take $M=K_\p$, so we are dealing with $\mathrm{Gal}(\bar K_\p/K_\p)$-cohomology. 
Since $B$ is a connected linear algebraic group, $H^1(K_{\p},B) = 0$ (cf.\ Theorem 1 of Section III.2.2 of \cite{serre}). Hence, $$G(K_{\p})/B(K_{\p})=(G/B)(K_{\p}).$$ Since $G/B$ is a projective variety, we know that $$(G/B)(K_{\p}) = (\mathcal{G}/\mathcal{B})(\mathcal{O}_{\p}).$$

For the right hand side of (\ref{quot}), we set $M=\cO_\p$ and argue as in Step 3 of Theorem 6.5.12 of \cite{poonen}: $H^1(\mathcal{O}_{\p},\mathcal{B})$ classifies $\mathcal{B}$-torsors over $\cO_\p$; let $\mathcal T \rightarrow \mathrm{Spec}\, \cO_\p$ denote such a torsor. By Lang's theorem, its special fibre $\mathcal{T}_\p \rightarrow \mathrm{Spec}\, \mathbf{F}_{\p}$ over the finite residue field $\mathbf{F}_\p$ has a rational point. Since $\mathcal{B}$ smooth, so is $\mathcal{T}$, so we can lift the rational point by Hensel's Lemma. Hence, $\mathcal T$ is also trivial. We conclude that $H^1(\mathrm{Spec}\, \cO_\p,\mathcal B)=0$, so $$(\mathcal{G}/\mathcal{B})(\mathcal{O}_{\p}) =  \mathcal{G}(\mathcal{O}_{\p})/\mathcal{B}(\mathcal{O}_{\p})$$ as claimed.
\end{proof} 

To finish the proof of the proposition, let $\D$ denote a maximal divisible subgroup of $G(\A_{K,f})$ and let $\D_\p = \D \cap G(K_\p)$ be its local component for $\p \in M_{K,f}$. Let $\gamma_\p \in G(K_\p)$ be as in Lemma \ref{decomp}, i.e., such that $\gamma_\p \D_\p \gamma_p^{-1}  = U(K_\p)$. Since $B=N_G(U)$ is the normaliser of $U$, Lemma \ref{decomp} implies that for all but finitely many $\p$, we may replace $\gamma_\p$ by an element in $\mathcal{G}(\cO_\p)$, which we again denote by $\gamma_\p$ for ease of notation. In this way, we find $\gamma = \prod\limits_{\p \in M_{K,f}} \gamma_\p \in G(\A_{K,f})$ with 
\[
\D \subseteq \gamma^{-1} \prod U(K_\p) \gamma \cap G(\A_{K,f}) = \gamma^{-1} U(\A_{K,f}) \gamma \subseteq \D,
\]
where the last inclusion holds by Lemma \ref{div-global}. \end{proof} 

\section{Proof of Theorem \ref{mainG}} \label{red3}

We now turn to the proof of Theorem \ref{mainG}. We first recall its statement:
\begin{theorem}[Theorem \ref{mainG}]
Let $K$ and $L$ be two number fields, and let $G$ denote a 
linear algebraic group over $\Q$, fertile for $K$ and $L$. There is a topological group isomorphism of adelic point groups $G(\A_{K,f}) \cong G(\A_{L,f})$ if and only if there is a topological ring isomorphism $\A_K \cong \A_L$. 
\end{theorem}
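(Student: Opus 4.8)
The plan is to prove both implications, the reverse being essentially formal and the forward direction carrying all the content.

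\emph{The reverse implication} is where I would spend the least effort: if $\A_K \cong \A_L$ as topological rings, then by Proposition~\ref{Klingen} also $\A_{K,f} \cong \A_{L,f}$, and since $G$ is defined over $\Q$, naturality of the functor of points turns this into a topological group isomorphism $G(\A_{K,f}) \cong G(\A_{L,f})$. For the \emph{forward implication}, given a topological group isomorphism $\Phi \colon G(\A_{K,f}) \isomto G(\A_{L,f})$, the strategy is to recover the ring $\A_{K,f}$ from the bare topological group $G(\A_{K,f})$ in a manner visibly preserved by every group isomorphism, and then transport it through $\Phi$. First, by Proposition~\ref{Urecovery} the maximal divisible subgroups of $G(\A_{K,f})$ are precisely the conjugates of $\D_K := U(\A_{K,f})$, and likewise for $L$; since $\Phi$ and $\Phi^{-1}$ preserve maximal divisibility, after composing with a suitable inner automorphism I may assume $\Phi(\D_K) = \D_L$. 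As $B = N_G(U)$, the normaliser $\N_K := B(\A_{K,f})$ of $\D_K$ is carried onto $\N_L$, so $\Phi$ induces compatible homeomorphic isomorphisms of the quotient ``torus'' $\T_K = \N_K/\D_K \isomto \T_L$ and of the abelian group $\V_K := [\N_K,\D_K]/[\D_K,\D_K] \isomto \V_L$, intertwining the conjugation actions and hence giving an isomorphism of $\T$-modules.

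Next I would analyse the $\T$-module $\V$. Fertility of $G$ for $K$ guarantees that $T$ acts non-trivially on $U^{\ab}$, so $\V_K$ is a finite direct sum $\bigoplus_\alpha \V_{K,\alpha}$ of weight spaces indexed by the non-trivial roots $\alpha$, with each $\V_{K,\alpha} \cong (\A_{K,f},+)$ and $t \in \T_K \cong (\A_{K,f}^*)^r$ acting through the character $t \mapsto \prod_i t_i^{n_i}$ with exponent vector $(n_i) = \alpha \neq 0$. The crux is the computation of the equivariant endomorphism ring. A $\T$-equivariant additive endomorphism of $\V_{K,\alpha}$ commutes with multiplication by every power $s^{n_i}$, $s \in \A_{K,f}^*$; by Siegel's formula, which writes an arbitrary adele as a fixed $\Z$-linear combination of $n_i$-th powers, such an endomorphism is forced to be $\A_{K,f}$-linear, hence multiplication by a single element of $\A_{K,f}$. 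Thus $\mathrm{End}_{\T_K}(\V_{K,\alpha}) \cong \A_{K,f}$, and since equivariant maps between distinct weight spaces vanish (verified place by place in each $K_\p$), taking the centre of $\mathrm{End}_{\T_K}(\V_K)$ produces a topological ring isomorphic to the Cartesian power $\A_{K,f}^m$, where $m$ is the number of distinct non-trivial weights of $T$ on $U^{\ab}$ — an invariant of $G$ alone, hence identical for $K$ and $L$. Transporting through $\Phi$ yields a topological ring isomorphism $\A_{K,f}^m \cong \A_{L,f}^m$.

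Finally I would descend from the Cartesian power to a single factor using the structure of the finite adele ring. The maximal proper principal ideals of $\A_{K,f}$ are exactly those generated by the idempotents $e_\p = (1,\dots,1,0,1,\dots)$ having a single zero in the place $\p$, and $\A_{K,f}/e_\p\A_{K,f} \cong K_\p$; this intrinsic description lets one read off from $\A_{K,f}^m$ the multiset of local fields $\{K_\p\}_\p$, each occurring with multiplicity $m$. A topological ring isomorphism $\A_{K,f}^m \cong \A_{L,f}^m$ therefore matches these multisets, and dividing out the common multiplicity $m$ yields a bijection $\varphi \colon M_{K,f} \to M_{L,f}$ with $K_\p \cong L_{\varphi(\p)}$ for all $\p$, i.e.\ a local isomorphism. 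By Proposition~\ref{Klingen} this gives $\A_K \cong \A_L$, completing the proof.

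I expect the main obstacle to be the endomorphism-ring computation: the torus acts on each weight space only through the \emph{powers} $s \mapsto s^{n_i}$ rather than through all of $\A_{K,f}^*$, so Siegel's identity is genuinely needed to recover full $\A_{K,f}$-linearity, and the abundant zero-divisors of $\A_{K,f}$ must be handled carefully when showing that cross-terms between distinct weight spaces vanish. The concluding descent from $\A_{K,f}^m$ to $\A_{K,f}$ is a second, more elementary but still essential, point at which the fine arithmetic structure of the finite adele ring is used.
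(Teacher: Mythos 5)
Your proposal is correct and follows essentially the same route as the paper's own proof: recovering the maximal unipotent part as a maximal divisible subgroup (Proposition \ref{Urecovery}), passing to the normaliser $\N = B(\A_{K,f})$ and the $\T$-module $\V = [\N,\D]/[\D,\D]$, using Siegel's power formula to show $\T$-equivariant endomorphisms of a weight space are $\A_{K,f}$-linear, identifying $Z(\mathrm{End}_{\T}\V) \cong \A_{K,f}^{\ell}$, and then reading off the multiset of local fields from the principal maximal ideals before invoking Proposition \ref{Klingen}. The only imprecision is that weights may occur with multiplicity, so $\mathrm{End}_{\T}(\V)$ is a product of matrix algebras $\mathrm{Mat}_{\mu_\alpha}(\A_{K,f})$ rather than $\A_{K,f}^{m}$ itself, but taking the centre --- as you do --- disposes of exactly this point.
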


\begin{proof} 
Let $\mathbf G := G(\A_{K,f})$ as a topological group. We will apply purely group theoretic construction to $\mathbf G$, to end up with the adele ring $\A_K$; this shows that the isomorphism type of the adele ring is determined by the topological group $\mathbf G$. Let $\D$ denote a maximal divisible subgroup of $\mathbf G$. Consider the normaliser $\mathbf N:= N_{\mathbf G} \D$ of $\D$ in $\mathbf G$. Let $\mathbf V := [\mathbf N, \D]/[\D,\D] \leq \D^{\ab}$, and let $\mathbf T := \mathbf N / \mathbf D$. Note that $\mathbf T$ acts naturally on $\mathbf V$ by conjugation. 
Since $\mathbf V$ is locally compact Hausdorff, we can give $\mathrm{End} \mathbf V$, the endomorphisms of the abelian group $\mathbf V$, the compact-open topology.

\begin{proposition} \label{end}
There exists an integer $\ell \geq 1$ such that there is a topological ring isomorphism $$Z(\mathrm{End}_{\mathbf T} \mathbf V) \cong \A_{K,f}^{\ell},$$ where the left hand side is the centre of the ring of continuous endomorphism of the $\mathbf T$-module $\mathbf V$. 
\end{proposition}

\begin{proof}[Proof of Proposition \ref{end}] First, we relate the subgroups of $\mathbf G$ to points groups of algebraic subgroups of $G$. From Proposition \ref{Urecovery}, we may assume that $\D = U(\A_{K,f})$ for a maximal unipotent algebraic subgroup of $G$. 
The normaliser of $U$ in $G$ as an algebraic group is a Borel group $B$ inside $G$ (Theorem of Chevalley, e.g.\ \cite{MR1102012}, 11.16). Since taking points and taking normalisers commute (\cite{milneAGS}, Proposition 6.3), we obtain that 
\[
\mathbf N = N_{\mathbf G} \D  = N_{G(\A_{K,f})}U(\A_{K,f}) = (N_G U)(\A_{K,f}) = B(\A_{K,f}).
\]
and $\mathbf T \cong T(A_{K,f})$ for $T$ any maximal torus in $B$. 

Next, we analyse the action of $\mathbf T$ on $\mathbf V$, knowing the action of $T$ on $U$. The hypothesis that $T$ splits over $K$  implies that $T \cong \G_m^{r}$ over $K$ for some $r$. The adjoint action of $T$ by conjugation on $U$ maps commutators to commutators, so it factors through the abelianisation $U^{\ab}$, and we can consider the linear adjoint action $\rho \colon T \rightarrow \mathrm{Aut}(U^{\ab})$ over $K$. Note that $U^{\ab} \cong \G_a^k$ for some integer $k$, so we have an action over $K$
\begin{equation}
\rho \colon T (\cong \G_m^{r}) \to \mathrm{Aut}(\G_a^k)=\GL(k),
\end{equation}
which  is diagonalisable over $K$ as a direct sum $\rho = \oplus \chi_i$ of $k$ characters $\chi_i \in \Hom_K(T,\G_m)$ of algebraic groups. In coordinates $t~=~(t_1, \ldots, t_{r}) \in \G_m^r = T$, any such character is of the form \begin{equation} \label{ch} \chi (t) = \chi(t_1, \ldots, t_{\ell})= t_1^{n_1} \cdot \ldots \cdot t_{r}^{n_{r}} \end{equation} 
for some $n_1, \ldots, n_{r} \in \Z$.   
Since the action of $\mathbf T$ on $\mathbf V$ is given by specialisation from the action of $T$ on a subspace of $U^{\ab}$, we find an isomorphism of $\mathbf T$-modules $$\mathbf V \cong  \bigoplus_{i=1}^{\ell} \A_{K,f,\chi_i}^{\mu_i},$$ where  $\chi_i$ ($i=1,\dots \ell$) are the distinct non-trivial characters that occur in $\mathbf V$, $\mu_i$ is the multiplicity of $\chi_i$ in $\mathbf V$, and $\A_{K,f,\chi_i}$ is the $\mathbf T$-module $\A_{K,f}$ where $\mathbf T$ acts via $\chi_i$. 
Hence, 
\begin{equation} \label{mat} \mathrm{End}_{\mathbf T} \mathbf V = \prod_{i=1}^\ell \prod_{j=1}^\ell \mathrm{Mat}_{\mu_j\times\mu_i} \left( \mathrm{Hom}_{\mathbf T}(\A_{K,f,\chi_i},\A_{K,f,\chi_j}) \right). \end{equation} 

The assumption of fertility means precisely that $\ell \geq 1$. 

\begin{lemma} \label{si}
If $\chi_i$ and $\chi_j$ are non-trivial characters occurring in the above decomposition, then there is a topological ring isomorphism $$\mathrm{Hom}_{\mathbf T}(\A_{K,f,\chi_i},\A_{K,f,\chi_j}) \cong \begin{cases} \A_{K,f} &\mbox{if } \chi_i = \chi_j \\
\{0\} & \mbox{otherwise }. \end{cases}$$
\end{lemma}

\begin{proof}
A homomorphism of additive groups $f \colon (\A_{K,f,\chi_i},+) \rightarrow (\A_{K,f,\chi_j},+)$ is $\T$-equivariant precisely if $f(\chi_i(t)(u))=\chi_j(t)f(u)$ for all $t \in \mathbf T$ and $u \in \A_{K,f}$. The $\chi$ are specialisations of algebraic characters as in (\ref{ch}), and some powers are non-zero by the assumption of fertility. If $\chi_i \neq \chi_j$, this means that 
\begin{equation} f(t^nu)=t^mf(u), \forall t \in \A_{K,f}^*, \ \forall u \in \A_{K,f} 
\end{equation}
for some $n,m>0$, $n \neq m$, which is impossible unless $f=0$: indeed, choose $u$ such that $f(u)$ is not a zero divisor, and choose $t \in \Z_{>0}$; then the equation says that $t^n f(u) = t^m f(u)$, so $m=n$. So we must have $\chi_i = \chi_j$, and we find that 
\begin{equation} \label{f} f(t^nu)=t^nf(u), \forall t \in \A_{K,f}^*, \ \forall u \in \A_{K,f} 
\end{equation}
for some $n>0$. 

We now reinterpret a formula of Siegel (\cite{MR0009778}, p.\ 134) as saying the following: \emph{
Let $R$ denote a ring and $n$ a positive integer such that $n!$ is invertible in $R$. Then any element of $R$ belongs to the $\Z$-linear span of the $n$-th powers in $R$. In particular, we have the following explicit formula for any $z \in R$: 
\begin{equation*}
z = \sum_{k=0}^{n-1} (-1)^{n-k-1} \binom{n-1}{k} \left\{ \left( \frac{z}{n!}+k\right)^n-k^n \right\}. 
\end{equation*}
}
Applied to $R=\A_{K,f}$, in which $n!$ is invertible, Siegel's formula expresses any element of $\A_{K,f}$ as $\Z$-linear combination of $n$-th powers in $\A_{K,f}$. We conclude from 
(\ref{f}) and additivity of $f$ that 
\begin{equation} f(tu)=tf(u), \forall t \in \A_{K,f}^*, \ \forall u \in \A_{K,f}. \end{equation}
Hence, $f(t)=tf(1)$ is completely determined by specifying a value for $f(1) \in \A_{K,f}$, and $$\mathrm{End}_{\mathbf T} (\A_{K,f,\chi}) \rightarrow \A_{K,f} \colon f \mapsto f(1)$$ is the required ring isomorphism.  It is continuous, since evaluation maps (such as this one) are continuous in the compact-open topology on $\mathrm{End}_{\mathbf T} (\A_{K,f,\chi})$. The inverse map is $\alpha \mapsto (x \mapsto \alpha x)$, which is also continuous in the finite-adelic topology on $\A_{K,f}$. Hence, we find an isomorphism of topological groups, as required.
\end{proof}
To finish the proof of Proposition \ref{end}, combine Lemma \ref{si} with Equation \ref{mat} and takes centres: 
 \begin{equation*} Z\left(\mathrm{End}_{\mathbf T} \mathbf V\right) = Z\left(\prod_{i=0}^\ell M_{\mu_i} \left( \A_{K,f} \right) \right) = \A_{K,f}^{\ell}. \end{equation*} 
\end{proof}

If $R$ is a ring, let $\cM(R)$ denote its set of principal maximal ideals. Observe that $\cM(R^\ell) = \cM(R) \times \Z/\ell\Z$, since a maximal ideal in $R^\ell$ is of the form $R^{\ell_1} \times \m \times R^{\ell_2}$ for some maximal ideal $\m$ of $R$ and a decomposition $\ell = \ell_1+\ell_2+1$. Now we recall the description of the principal maximal ideals in an adele ring $\A_{K,f}$ as given by Iwasawa and Lochter (\cite{Lochter}, Satz 8.6 and \cite{Iwasawa}, p.\ 340--342, cf.\ \cite{MR1638821}, VI.2.4): 
$$\cM(\A_{K,f}) = \{  \m_{\p} = \ker \left(\A_{K,f} \rightarrow K_{\p}\right)  \}. $$
Note that $\A_{K,f}/\m_{\p} \cong K_{\p}$. 
Hence the multiset 
$$ \{ \A^\ell_{K,f}/\m \colon \m \in \cM(\A^\ell_{K,f}) \} $$
contains a copy of the local field $K_\p$ exactly $\ell r_\p$ times, where $r_\p$ is the number of local fields of $K$ isomorphic to $K_\p$.
Thus, we have constructed the multiset of local fields
$$ \{ K_{\p} \colon \p \in M_{K,f} \}$$ of $K$, up to isomorphism of local fields.

Now if $K$ and $L$ are two number fields with $G(\A_{K,f}) \cong G(\A_{L,f})$ as topological groups, then these multisets are in bijection, i.e., there exists a bijection of places $\varphi \colon M_{K,f} \rightarrow M_{L,f}$ such that $K_{\p} \cong L_{\varphi(\p)}$ for all $\p \in M_{K,f}$. 
Hence $K$ and $L$ are locally isomorphic (in the sense of Section \ref{deflocal}), and we find ring isomorphisms $\A_{K,f} \cong \A_{L,f}$ and $\A_K \cong \A_L$, by Proposition \ref{Klingen}.  

For the reverse implication $\A_K \cong \A_L \Rightarrow G(\A_{K,f}) \cong G(\A_{L,f})$, we use that a topological ring isomorphism $\A_K \cong \A_L$ implies the existence of topological isomorphisms $\Phi_\p \colon K_\p \cong L_{\varphi(\p)}$ of local fields for some bijection of places $\varphi \colon M_{K,f} \rightarrow M_{L,f}$ (again by Proposition \ref{Klingen}). The fact that all $\Phi_\p$ are homeomorphisms implies in particular that $\Phi_\p(\cO_{K,\p}) = \cO_{L,\varphi(\p)}$ for all $\p$. Now $G(\A_{K,f}) \cong G(\A_{L,f})$ is immediate from the definition of finite-adelic point groups (with topology) in \ref{pointgroups}(2) or, equivalently, \ref{pointgroups}(3). This finishes the proof of Theorem \ref{mainG}. 
\end{proof}

\begin{remark}
The proof also shows that if $G$ is fertile for two number fields $K$ and $L$, then there is an abstract group isomorphism of adelic point groups $G(\A_{K,f}) \cong G(\A_{L,f})$ if and only if there is an abstract ring isomorphism $\A_K \cong \A_L$. 
\end{remark}

\section{Hecke algebras and proof of Theorem \ref{mainH}} \label{HH}

\begin{definition}\label{Heckealgebra} Let $G$ denote a linear algebraic group over $\Q$. Then $\G_K:=G(\A_{K,f})$ is a locally compact topological group (since $\A_{K,f}$ is locally compact) for the topology described in Definition \ref{pointgroups}, equipped with a (left) invariant Haar measure $\mu_{\G_K}$. The finite (or non-archimedean) real \emph{Hecke algebra} $\cH_G(K)=C^{\infty}_c(\G_{K},\R)$ of $G$ over $K$ is the algebra of all real-valued locally constant compactly supported continuous functions $\Phi: \G_K \rightarrow \R$ with the convolution product:
\[
 \Phi_1 \ast \Phi_2 : g \mapsto \int_{\G_K} \Phi_1(gh^{-1})\Phi_2(h)d\mu_{\G_K}(h).
\]
\end{definition}
Every element of $\cH_G(K)$ is a finite linear combination of characteristic functions on double cosets $\mathbf{K}h\mathbf{K}$, for $h \in \G_K$ and $\mathbf{K}$ a compact open subgroup of $\G_K$. Alternatively, we may write 
$$ \cH_G(K) = \lim_{\substack{\longrightarrow \\ \mathbf{K}}} \cH(\G_K /\!\!/ \mathbf{K}), $$
where $\cH(\G_K/\!\!/\mathbf{K})$ is the Hecke algebra of $\mathbf{K}$-biinvariant smooth functions on $\G_K$ (for example, if $\mathbf{K}$ is maximally compact, this is the spherical Hecke algebra).

\begin{definition} If $\G$ is a locally compact topological group equipped with a Haar measure $\mu_{\G}$, let $L^1(\G)$ denote its \emph{group algebra}, i.e., the algebra of real-valued $L^1$-functions with respect to the Haar measure $\mu_{\G}$, under convolution. 

An isomorphism of Hecke algebras $ \Psi \colon \cH_G(K) \isomto \cH_G(L)$ which is an isometry for the $L^1$-norms arising from the Haar measures (i.e., $||\Psi(f)||_1=||f||_1$ for all $f \in \cH_G(K)$) is called an \emph{$L^1$-isomorphism.} \end{definition}

Before we give its proof, let us recall the statement of Theorem \ref{mainH}:
\begin{theorem}[Theorem \ref{mainH}]
Let $K$ and $L$ be two number fields, and let $G$ denote a
linear algebraic group over $\Q$, fertile for $K$ and $L$. There is an $L^1$-isomorphism of Hecke algebras $\cH_G(K) \cong \cH_G(L)$ if and only if there is a ring isomorphism $\A_K \cong \A_L$. 
\end{theorem}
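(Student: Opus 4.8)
The plan is to reduce the statement about Hecke algebras to Theorem \ref{mainG} about finite-adelic point groups, since we already know that $G(\A_{K,f}) \cong G(\A_{L,f})$ (as topological groups) is equivalent to $\A_K \cong \A_L$. The key observation is that the Hecke algebra $\cH_G(K) = C_c^\infty(\G_K, \R)$ sits densely inside the group algebra $L^1(\G_K)$, and that the relevant reconstruction theorem is available at the level of $L^1$-group algebras. Concretely, I would invoke the theorem of Kawada \cite{Kawada} and Wendel \cite{MR0049910}: for locally compact groups $\G_K$ and $\G_L$, any isometric algebra isomorphism $L^1(\G_K) \isomto L^1(\G_L)$ for the $L^1$-norm is necessarily induced by a topological group isomorphism $\G_K \cong \G_L$ (up to composition with multiplication by a continuous character/positive scalar accounting for the modular function, which for our purposes does not affect the existence of the underlying group isomorphism).

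The forward direction ($L^1$-isomorphism $\Rightarrow$ ring isomorphism) would proceed as follows. First I would show that an $L^1$-isometric isomorphism $\Psi \colon \cH_G(K) \isomto \cH_G(L)$ of Hecke algebras extends to an isometric isomorphism of the completions $L^1(\G_K) \isomto L^1(\G_L)$; this uses that $\cH_G(K) = C_c^\infty(\G_K,\R)$ is $L^1$-dense in $L^1(\G_K)$ (the locally constant compactly supported functions are dense in $L^1$ for a locally compact totally disconnected group such as $\G_K = G(\A_{K,f})$), together with the fact that $\Psi$ is an isometry, so it is uniformly continuous and extends uniquely to the completion while remaining an isometric algebra homomorphism. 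Then I would apply the Kawada--Wendel theorem to obtain a topological group isomorphism $\G_K \cong \G_L$, i.e.\ $G(\A_{K,f}) \cong G(\A_{L,f})$. Finally, by Theorem \ref{mainG} (using that $G$ is fertile for both $K$ and $L$), this yields a topological ring isomorphism $\A_K \cong \A_L$.

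The reverse direction ($\A_K \cong \A_L \Rightarrow L^1$-isomorphism) is the easier one: by Theorem \ref{mainG} a ring isomorphism $\A_K \cong \A_L$ produces a topological group isomorphism $\Theta \colon \G_K \isomto \G_L$. Since this isomorphism is built locally from homeomorphisms $\Phi_\p \colon K_\p \cong L_{\varphi(\p)}$ sending $\cO_{K,\p}$ to $\cO_{L,\varphi(\p)}$, it carries compact open subgroups to compact open subgroups and can be normalised to preserve Haar measure (both groups being unimodular in the relevant sense, or by rescaling the Haar measure appropriately). Pulling back functions along $\Theta$ then gives an algebra isomorphism $\cH_G(K) \cong \cH_G(L)$ that is an $L^1$-isometry, because a measure-preserving topological group isomorphism induces an isometric isomorphism of $L^1$-group algebras and restricts to locally constant compactly supported functions.

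I expect the main obstacle to be the careful handling of the $L^1$-density and extension step, and in particular verifying that the hypotheses of the Kawada--Wendel theorem are genuinely met: the theorem concerns isometric isomorphisms of the full group algebras $L^1(\G)$, whereas our datum is an isometry only of the dense subalgebra $\cH_G(K)$. One must confirm that the isometry property is exactly what allows the unique extension to the completion and that no additional structure (such as positivity or preservation of an involution) is needed beyond what an $L^1$-isometric algebra isomorphism already provides. A secondary technical point is the precise bookkeeping of Haar measure normalisation so that the induced map is an honest isometry rather than merely a bounded isomorphism; this is routine but must be done to legitimately invoke the reconstruction theorem.
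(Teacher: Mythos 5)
Your proposal is correct and follows essentially the same route as the paper: establish $L^1$-density of $\cH_G(K)$ in $L^1(\G_K)$ (the paper uses Stone--Weierstrass where you invoke total disconnectedness, but this is the same step), extend the isometry to the group algebras, apply the Kawada--Wendel reconstruction theorem to get $\G_K \cong \G_L$, and conclude via Theorem \ref{mainG}; the reverse direction by pulling back along the group isomorphism with Haar measure normalisation is likewise the paper's (implicit) argument, which you in fact spell out more carefully.
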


\begin{proof}
The proof consists of two steps: first we show, using the Stone-Weierstrass theorem, that the Hecke algebras are dense in the group algebras, and then we use results on reconstructing a locally compact group from its group algebra due to Kawada (for positive maps) and Wendel (for $L^1$-isometries).  
\subsubsection*{Step 1: An $L^1$-isomorphism $\cH_G(K) \cong \cH_G(L)$ implies an $L^1$-isomorphism $L^1(\G_K) \cong L^1(\G_L)$. 
}\label{o1} 
The locally compact real version of the Stone-Weierstrass theorem implies that $\cH_G(K)$ is dense in $C_0(\G_K)$ for the sup-norm, where $C_0(\G_K)$ denotes the functions that vanish at infinity, i.e., such that $|f(x)|< \varepsilon$ outside a compact subset of $\G_K$. Indeed, one needs to check the nowhere vanishing and point separation properties of the algebra (\cite{HS}, 7.37.b). Since $\cH_G(K)$ contains the characteristic function of any compact subset $\mathbf{K} \subseteq \G_K$, the algebra vanishes nowhere, and the point separating property follows since $\G_K$ is Hausdorff. 

A fortiori, $\cH_G(K)$ is dense in the compactly supported functions $C_c(\G_K)$ for the sup-norm, and hence also in the $L^1$-norm. Now $C_c(\G_K)$ is dense in $L^1(\G_K)$, and the claim follows. 

\subsubsection*{Step 2:   
An $L^1$-isometry $L^1(\G_K) \cong L^1(\G_L)$ implies a group isomorphism $\G_K \cong \G_L$}
Indeed, 
an $L^1$-isometry  $\cH_G(K) \cong \cH_G(L)$ implies an $L^1$-isometry of group algebras $L^1(\G_K) \cong L^1(\G_L)$. Hence the result follows from combining theorems of Wendel \cite{MR0049910} and Kawada \cite{Kawada}, which prove that an $L^1$-isometry of group algebras of locally compact topological groups is always induced by an isomorphism of the topological groups.

Finally, Theorem \ref{mainG} says that if $G$ is fertile, then $\G_K \cong \G_L$ implies that $\A_{K} \cong \A_{L}$. 
\end{proof}

\begin{corollary} \label{fh}
If $G$ is a
connected linear algebraic group over $\Q$ which is fertile for $K$ and $L$, where $K$ and $L$ are two numbers fields which are Galois over $\Q$, then an $L^1$-isomorphism of Hecke algebras $\cH_G(K) \cong \cH_G(L)$ implies that the fields $K$ and $L$ are isomorphic.
\end{corollary}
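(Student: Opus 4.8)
The plan is to deduce the corollary by chaining Theorem \ref{mainH} with the purely arithmetic input recorded in Proposition \ref{Klingen}. Since $G$ is assumed fertile for both $K$ and $L$, Theorem \ref{mainH} applies verbatim: an $L^1$-isomorphism $\cH_G(K) \cong \cH_G(L)$ yields a topological ring isomorphism $\A_K \cong \A_L$. This is the only step that uses the Hecke-algebraic hypothesis, and it already absorbs all of the analytic content (the Stone--Weierstrass density of $\cH_G$ in the group algebra, and the Kawada--Wendel reconstruction of the group from its $L^1$-algebra). From this point on, the argument is entirely about number fields and no longer sees $G$.

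It then remains to promote a ring isomorphism of adele rings to a field isomorphism, and this is where the Galois hypothesis enters. First I would invoke Proposition \ref{Klingen}(iii): a topological ring isomorphism $\A_K \cong \A_L$ forces $\zeta_K = \zeta_L$, i.e.\ $K$ and $L$ are arithmetically equivalent. The final implication is Ga{\ss}mann's theorem, again packaged in Proposition \ref{Klingen}(iii): as soon as one of $K$ or $L$ is Galois over $\Q$, all of the implications among field isomorphism, adele-ring isomorphism, and arithmetic equivalence can be reversed. Hence $\zeta_K = \zeta_L$ together with the assumption that $K$ (or $L$) is Galois over $\Q$ gives the desired field isomorphism $K \cong L$.

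I would stress that the connectedness hypothesis on $G$ plays no essential role in the argument: fertility already encodes the relevant structure of the identity component via Proposition \ref{Wilb}, and connectedness is stated mainly to align the corollary with the setting of Corollary \ref{corc}. The genuine content of the statement is thus the composite implication \emph{Hecke algebra isometry} $\Rightarrow$ \emph{adele ring isomorphism} $\Rightarrow$ \emph{equal zeta functions} $\Rightarrow$ \emph{isomorphic fields}, with the three arrows supplied respectively by Theorem \ref{mainH}, by the Komatsu-type implication in Proposition \ref{Klingen}(iii), and by Ga{\ss}mann's theorem.

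Accordingly, I do not expect a substantive obstacle in proving this corollary: once Theorem \ref{mainH} is granted, the remaining steps are citations to classical results on arithmetic equivalence. If anything deserves care, it is only the bookkeeping around the Galois hypothesis, namely that it suffices to assume \emph{one} of the two fields is Galois over $\Q$ rather than both, which is precisely the form in which Ga{\ss}mann's theorem and Proposition \ref{Klingen}(iii) are available. The real mathematical work is upstream, in the group-theoretic recovery of $\A_{K,f}$ carried out in the proof of Theorem \ref{mainG}, and this corollary simply harvests that result in the special case $G = \GL(n)$ used in Corollary \ref{corc}.
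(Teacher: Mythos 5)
Your proposal is correct and follows essentially the same route as the paper: the paper's (very brief) proof likewise chains Theorem \ref{mainH} to obtain $\A_K \cong \A_L$, deduces arithmetic equivalence, and then cites Proposition \ref{Klingen}(iii) (Ga{\ss}mann) together with the Galois hypothesis to conclude $K \cong L$. Your additional remarks on the roles of connectedness and the one-sided Galois assumption are accurate but not needed beyond what the paper's two-line proof already contains.
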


\begin{proof}
Since the hypotheses imply that the fields are arithmetically equivalent, the result follows from Proposition \ref{Klingen}.(iii). 
\end{proof} 

Since $\GL(n)$ is fertile if $n \geq 2$, we obtain Corollary \ref{corc}. 

\subsection*{Variations on Theorem \ref{mainH}} 
\begin{enumerate} 
\item The theorem is also true if the real-valued Hecke algebra is replaced by the complex-valued Hecke algebra (using the complex versions of Stone-Weierstrass and Kawada/Wendel). 
\item It seems that the theorem also holds for the full Hecke algebra $\cH_G \otimes\cH_G^{\infty},$ where $\cH_G^\infty$ is the archimedean Hecke algebra for $G$, viz., the convolution algebra of distributions on $G(\R\otimes_{\Q} K)$ supported on a maximal compact subgroup of $G(\R\otimes_{\Q} K)$, 
but we have not checked the analytic details. 
\end{enumerate} 

\section{Discussion}

One may wonder in what exact generality Theorem \ref{mainG} holds.
\begin{enumerate} 
\item The theorem does not hold for all linear algebraic groups; see, e.g., Example \ref{e1}. Is it possible to characterise \emph{precisely} the linear algebraic groups for which $G(\A_K) \cong G(\A_L)$ implies $\A_K \cong \A_L$? 
\item What happens if $G$ is not a linear algebraic group, but any algebraic group? It follows from Chevalley's structure theorem that such $G$ have a unique maximal linear subgroup $H$; can we deduce $H(\A_K) \cong H(\A_L)$ from $G(\A_K) \cong G(\A_L)$?
\item What happens if there is no linear part, i.e., $G$ is an abelian variety, e.g., an elliptic curve? For every number field, is there a sufficiently interesting elliptic curve $E/\Q$ such that $E(\A_K)$ determines all localisations of $K$? 
\item Is the theorem true without imposing that the maximal torus $T$ of $G$ splits over $K$ and $L$?
\item What happens over global fields of positive characteristic?  
\item The category of $\cH_G(K)$-modules does not seem to determine the field $K$ (cf.\ forthcoming work of the second author). Can the category be enriched in some way so as to determine $K$? Our theorem suggests to try and keep track of some analytic information about $\cH_G(K)$ related to the $L^1$-norm. 
\end{enumerate}

\end{document}